\newtheorem{thm}{Theorem}[section]
\newtheorem{lem}[thm]{Lemma}
\newtheorem{prop}[thm]{Proposition}
\theoremstyle{definition}
\newtheorem{defn}[thm]{Definition}
\newtheorem{cor}[thm]{Corollary}
\newtheorem{question}[thm]{Question}
\newcommand{\Diff}{\mathop{\rm Diff}}
\newcommand{\Homeo}{\mathop{\rm Homeo}}
\newcommand{\supp}{\mathop{\rm supp}}
\begin{document}

\author{Azer Akhmedov}
\address{Azer Akhmedov,
Department of Mathematics,
North Dakota State University,
PO Box 6050,
Fargo, ND, 58108-6050}
\email{azer.akhmedov@ndsu.edu}

\author{Michael P. Cohen}
\address{Michael P. Cohen,
Department of Mathematics,
North Dakota State University,
PO Box 6050,
Fargo, ND, 58108-6050}
\email{michael.cohen@ndsu.edu}

\title[Existence and genericity of topological generating sets]{Existence and genericity of finite topological generating sets for homeomorphism groups}

\begin{abstract}  We show that the topological groups $\Diff_+^1(I)$ and $\Diff_+^1(\mathbb{S}^1)$ of orientation-preserving $C^1$-diffeomorphisms of the interval and the circle, respectively, admit finitely generated dense subgroups.  We also investigate the question of genericity (in the sense of Baire category) of such finite topological generating sets in related groups.  We show that the generic pair of elements in the homeomorphism group $\Homeo_+(I)$ generate a dense subgroup of $\Homeo_+(I)$.  By contrast, if $M$ is any compact connected manifold with boundary other than the interval, we observe that an open dense set of pairs from the associated boundary-fixing homeomorphism group $\Homeo_0(M,\partial M)$ will generate a discrete subgroup.  We make similar observations for homeomorphism groups of manifolds without boundary including $\mathbb{S}^1$.
\end{abstract}

\maketitle


\section{Introduction}

A finite collection of elements $g_1,g_2,...,g_n$ in a separable topological group $G$ is called a (finite) \textit{topological generating set} for $G$ if the countable group $\Gamma=\langle g_1,g_2,...,g_n\rangle$ which they generate is dense in $G$.  In this case we say $G$ is \textit{topologically $n$-generated}.  In 1990, Hofmann and Morris \cite{HM} showed that every separable compact connected group is topologically $2$-generated.  Their result built upon earlier work of Kuranishi \cite{K} for compact semisimple Lie groups.  Very recently, Gelander and Le Maitre \cite{GL} have applied the solution to Hilbert's Fifth Problem to give a very general result: every separable connected locally compact group is topologically finitely generated.

\medskip

The homeomorphism and diffeomorphism groups of compact manifolds are far from locally compact, and so different tools are needed for their study.  We are partially motivated in this paper by a desire to study these groups, loosely, as infinite-dimensional analogues of Lie groups.  In particular we seek to understand their discrete and dense finitely generated subgroups.  The reader may consult the paper \cite{A} of the first named author for a large number of remarks and open questions in this program.

\medskip

It is very well-known that the group $\Homeo_+(I)$ of orientation-preserving homeomorphisms of the interval $I=[0,1]$ is topologically $2$-generated.  The most popular example of a $2$-generated dense subgroup in $\Homeo_+(I)$ is Thompson's group $F$, represented as a group of piecewise linear homeomorphisms.  It is also well-known that the usual embedding of $F$ may be ``smoothed out'' to find representations of $F$ which lie in $\mathrm{Diff}_+^k(I)$, for all $1\leq k\leq \infty$ \cite{GS}.  However, these representations of $F$ are never dense in $\mathrm{Diff}_+^k(I)$ with respect to the appropriate $C^k$ topology.  In fact, no finitely generated group has been shown to admit a $C^1$-dense representation in $\mathrm{Diff}_+^1(I)$.  Given that many separable topological groups do not admit finitely generated dense subgroups at all (this is easy to see in the additive product group $\mathbb{Z}^\mathbb{N}$ for instance), we are motivated by the question: is $\Diff_+^1(I)$ topologically finitely generated or not? This question is especially intriguing since the topological group $\Diff_+^1(I)$ is homeomorphic to an infinite dimensional separable Banach space.{\footnote {Any two infinite dimensional separable Banach spaces are homeomorphic by a result of M.I.Kadets \cite{Ka}, and it is easy to see that the map $f\to \ln f'(t)- \ln f'(0)$ establishes a homeomorphism from $\Diff_+^1(I)$ to the Banach space $C_0[0,1]$.}}

\medskip

The first main result of our paper is to show that, despite a lack of simple examples, dense finitely generated subgroups of $\Diff_+^1(I)$ do exist.

\begin{thm} \label{thm:main} There exists a finitely generated dense subgroup of $\Diff_+^1(I)$.
\end{thm}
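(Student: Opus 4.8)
The plan is to produce the dense subgroup by an explicit construction organized around a self-similar decomposition of the interval together with careful $C^1$ estimates; unlike for $\Homeo_+(I)$ (treated later via genericity), I would not expect a soft argument to work, so the generators should be exhibited by hand.

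It is useful to first record what the $C^1$-topology forces. The maps $f\mapsto\ln f'(0)$ and $f\mapsto\ln f'(1)$ are continuous homomorphisms onto $(\R,+)$, so together they give a continuous surjection $\pi\colon\Diff_+^1(I)\to\R^2$. If $\Gamma$ is dense in $\Diff_+^1(I)$ then $\pi(\Gamma)$ is a finitely generated dense subgroup of $\R^2$; but a finitely generated subgroup of $\R^2$ of rank at most $2$ is either discrete or contained in a line, hence never dense. So $\Gamma$ must have at least three generators, and we aim to produce some fixed finite number. Writing $K=\ker\pi$ (the closed subgroup of diffeomorphisms with $f'(0)=f'(1)=1$), I would split the work into producing a subgroup dense in $K$ and then adjoining a bounded number of elements to handle the boundary derivatives.

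For the $K$-part: fix an ``expanding'' diffeomorphism $a\in\Diff_+^1(I)$ with repelling hyperbolic fixed points at $0$ and $1$ and a single attracting fixed point $p\in(0,1)$, chosen with enough care (for instance piecewise M\"obius, then $C^1$-smoothed) that its iterates have uniformly bounded distortion on the intervals that arise below. A small interval $J\ni p$ then has the property that the $a^{-n}(J)$ exhaust $(0,1)$, so conjugation by $a^n$ ``zooms'' a diffeomorphism supported in $J$ out to one supported on most of $I$, and conversely zooms a compactly supported diffeomorphism of $I$ down into $J$. Build $b\in\Diff_+^1(I)$ self-similarly so that, along this exhaustion, it carries affine copies of the members of a countable family of small-support, small-$C^1$-norm diffeomorphisms, chosen rich enough that after being dispersed by powers of $a$ and recombined as products they $C^1$-approximate every compactly supported diffeomorphism of $I$; since these are $C^1$-dense in $K$ (a given $f\in K$ is close to the identity near the endpoints, so may be cut off there), $\langle a,b\rangle$ is dense in $K$. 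Finally adjoin elements $c_1,c_2$ (or build nontrivial boundary behavior into $a$ and $b$ themselves) so that the $\pi$-images of the generators span a rank-$3$ subgroup of $\R^2$ with a rationally generic direction — e.g.\ images $(1,0),(0,1),(\alpha,\beta)$ with $1,\alpha,\beta$ independent over $\Q$, which is dense by Weyl's theorem. Given a target $f$, one then hits $\pi(f)$ with a word in the generators, corrects the resulting element to lie exactly in $K$ by a small perturbation near the endpoints, and approximates the remainder inside $K$; this shows $\langle g_1,\dots,g_k\rangle$ is dense in $\Diff_+^1(I)$.

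The hard part will be the uniform $C^1$-control hidden inside the word ``zoom''. One must verify that the conjugates $a^n h a^{-n}$, and the pieces of the self-similar $b$, stay $C^1$-close to the diffeomorphisms they are meant to imitate as $n\to\infty$; unlike in the $C^0$ setting this is a genuine constraint, since $C^1$ norms of conjugates are governed by the distortion of the iterates of $a$, and it is precisely this that dictates both the explicit choice of $a$ and the requirement that the tiles of $b$ have $C^1$-norms decaying along the exhaustion. One must also check that the self-similar assembly defining $b$ converges in the $C^1$ topology, and that the fragmentation of a compactly supported diffeomorphism into small pieces can be arranged with the pieces lying in intervals actually reachable by words in $a$ and $b$. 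Granting these estimates, the bookkeeping that assembles the final generating set and verifies density should be routine.
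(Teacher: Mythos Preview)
Your overall architecture matches the paper's: reduce to the kernel $K=\{f:f'(0)=f'(1)=1\}$, build a finite topological generating set there, and handle the $\R^2$ quotient separately (the paper also does exactly this, via a rank inequality for extensions). The gap is in the construction for $K$. You propose essentially two generators $a,b$, with $b$ carrying a countable family of tiles along an $a$-driven exhaustion; but you give no mechanism for \emph{isolating} a single tile. Any word that conjugates $b$ by a power of $a$ moves all tiles at once, and since affine conjugation preserves $\sup|h'-1|$, letting the tiles of $b$ decay in $C^1$ (as you must, for $b$ to be $C^1$ at the endpoints) forces their zoomed images to have small $C^1$-norm as well, so they cannot directly approximate a target with large derivative. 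Your phrase ``recombined as products'' is where all the work would have to happen, and the proposal does not indicate how.

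The paper resolves precisely this tension with two ingredients absent from your sketch. First, an approximate-root lemma: for $f\in K$ without interior fixed points and any $r>0$ there exist $g$ and $N$ with $\|g-\mathrm{id}\|_1<r$ and $\|g^N-f\|_1<r$. This lets one encode a target $\eta_n$ on a small interval $I_n$ via four maps $f,g,u,v$, each $\delta_n$-close to identity on $I_n$, whose high powers combine to approximate $\eta_n$ after zooming. Second, an explicit erasing device: an additional generator $h$ is chosen so that the relevant word is a commutator $[h^{p}f^{M}g^{M}h^{-p},u^{M}v^{M}]$, and for suitable $p$ this commutator is the identity on all the earlier intervals $I_1,\dots,I_{n-1}$ because conjugation by $h^{p}$ disjoints the supports there; this kills the unwanted tiles. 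With two further zoom maps $\phi,\psi\in K$ (both tangent to identity at the endpoints, rather than your hyperbolic $a$), this yields seven generators for $K$ and ten for $\Diff_+^1(I)$. Your outline would need either a substitute for the root lemma or a substitute for the commutator erasing trick, and as written it supplies neither.
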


The core of the proof lies in constructing a set of seven topological generators $f,g,u,v,h,\phi,\psi$ for the commutator subgroup $G$ of $\mathrm{Diff}_+^1(I)$.  Stated informally, we use four generators $f,g,u,v$ to encode small copies of a countable dense subset $(\eta_n)_{n\geq 1}$ of $G$ along a sequence of disjoint subintervals $(I_n)_{n\geq 1}$ of $I$.  The fifth generator $h$ is used to ``erase'' finitely many excess copies when they are not needed, while the sixth and seventh generators $\phi$ and $\psi$ are used to magnify the small copies to approximate the maps $\eta_n$ to arbitrary precision.

\medskip

Our construction relies on the perfectness results of Tsuboi \cite{T} for diffeomorphism groups of the interval, as well as the following original lemma on approximation by diffeomorphisms with iterative $n$-th roots which is independently interesting.

\begin{lem}\label{lem:introroot} Let $f\in\Diff_+^1(I)$ without a fixed point in (0,1) and assume that $f'(0)=f'(1)=1$.  Then for every $r>0$, there exists a $g\in\Diff_+^1(I)$ and a positive integer $N$ such that $g$ is $r$-close to identity and $g^N$ is $r$-close to $f$, in the $C^1$ metric.
\end{lem}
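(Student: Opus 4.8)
The plan is to realize a $C^1$-approximation of $f$ as the time-one map of a genuine $C^1$ flow on $I$, and then take $g$ to be a time-$1/N$ map of that flow, so that $g^N$ equals the approximation exactly.

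First, some reductions. Replacing $f$ by $f^{-1}$ if necessary — note $(f^{-1})'(0)=(f^{-1})'(1)=1$ as well, inversion is continuous in the $C^1$ metric, and $g$ is $r$-close to the identity iff $g^{-1}$ is — we may assume $f(x)>x$ for all $x\in(0,1)$. The hypothesis $f'(0)=f'(1)=1$ together with $f(0)=0,\ f(1)=1$ says that $f$ is tangent to the identity at both fixed points; in particular $\|f-\mathrm{id}\|_{C^1}\to 0$ on $[0,\delta]\cup[1-\delta,1]$ as $\delta\to 0^+$. Fix $r>0$.

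Second — and this is the main step — I claim that $f$ can be $C^1$-approximated, within $r/2$, by a diffeomorphism $\tilde f\in\Diff_+^1(I)$ of the form $\tilde f=\phi_1$, where $(\phi_t)_{t\in\R}$ is the flow of a $C^1$ vector field $\xi(x)\,\partial_x$ on $I$ with $\xi>0$ on $(0,1)$, $\xi(0)=\xi(1)=0$, and $\xi'(0)=\xi'(1)=0$ (the last condition encodes $\tilde f'(0)=\tilde f'(1)=1$, since $\phi_1'(0)=e^{\xi'(0)}$). To build $\tilde f$: on the middle interval $[\delta,1-\delta]$, where $f$ is a fixed-point-free $C^1$ diffeomorphism and no endpoint constraints are in play, one mollifies $f$ to be $C^\infty$ by a $C^1$-small perturbation; near $x=0$ and $x=1$, using the smallness of $\|f-\mathrm{id}\|_{C^1}$ there, one perturbs $f$ ($C^1$-small) so that $\tilde f$ agrees exactly with the time-one maps of explicit model parabolic vector fields (comparable to $x^2\partial_x$ near $0$, to $(1-x)^2\partial_x$ near $1$), interpolating on two short collars. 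Then the classical Szekeres construction of a generating vector field applies to the fixed-point-free $C^\infty$ diffeomorphism $\tilde f$ of $(0,1)$ and produces $\xi\,\partial_x$ smooth on $(0,1)$ with $\phi_1=\tilde f$; near the endpoints $\xi\,\partial_x$ coincides with the chosen model vector fields, so it extends to a $C^1$ vector field on all of $I$ vanishing to the required order at $0$ and $1$.

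Finally, set $g:=\phi_{1/N}$. Since the flow of a $C^1$ vector field on the compact interval $I$ consists of $C^1$ diffeomorphisms, $g\in\Diff_+^1(I)$, and $g^N=\phi_1=\tilde f$, so $g^N$ is $r/2$-close, hence $r$-close, to $f$ in the $C^1$ metric. Moreover $\phi_t\to\phi_0=\mathrm{id}$ in the $C^1$ metric as $t\to 0$: from $\phi_t(x)=x+\int_0^t\xi(\phi_s(x))\,ds$ and $\partial_x\phi_t(x)=\exp\!\big(\int_0^t\xi'(\phi_s(x))\,ds\big)$ we get $\|\phi_t-\mathrm{id}\|_{C^0}\le |t|\,\|\xi\|_\infty$ and $\|\partial_x\phi_t-1\|_{C^0}\le e^{|t|\,\|\xi'\|_\infty}-1$, both of which tend to $0$ because $\xi\in C^1(I)$. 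Hence for $N$ large enough $g=\phi_{1/N}$ is $r$-close to the identity, which completes the proof. The main obstacle is the second step: producing a genuine $C^1$ generating vector field up to and including the endpoints of $I$ — here the hypothesis $f'(0)=f'(1)=1$ is essential, as it is exactly what makes the perturbation near each fixed point $C^1$-small and lets the model germs be matched. An alternative route with the same crux avoids flows: pick a basepoint $c\in(0,1)$ and a $C^1$ diffeomorphism $\Phi:(0,1)\to\R$ with $\Phi\circ f=\Phi+1$, built by prescribing $\Phi$ on one fundamental domain $[c,f(c)]$ and extending by the cocycle relation; then $g:=\Phi^{-1}\circ(y\mapsto y+1/N)\circ\Phi$ satisfies $g^N=f$ on the nose, and the work is to choose $\Phi$ so that $g$ extends to a $C^1$ diffeomorphism of $I$ that is $C^1$-close to the identity, which again reduces to a distortion estimate near $0$ and $1$ controlled by the tangency hypothesis.
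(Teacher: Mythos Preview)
Your flow approach is attractive and genuinely different from the paper's, but the second step --- producing a $C^1$ generating vector field on all of $[0,1]$ --- has a real gap.  You write that after arranging $\tilde f$ to agree with model time-one maps near $0$ and near $1$, ``the classical Szekeres construction'' applied to $\tilde f$ on $(0,1)$ yields a field $\xi$ which ``near the endpoints \ldots coincides with the chosen model vector fields.''  But Szekeres's theorem is a statement about germs at a parabolic fixed point: for a $C^2$ germ it furnishes the \emph{unique} $C^1$ field near that one fixed point whose time-one map is the germ.  On the open interval $(0,1)$ there is no canonical Szekeres field; any positive $\tilde f$-invariant vector field will do, and there is a full fundamental domain of freedom.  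If you take the Szekeres field at $0$ and extend it across $(0,1)$ by $\tilde f$-invariance, nothing forces it to match the model field near $1$: the ratio of the two Szekeres fields (one from each end) is a nonconstant $\tilde f$-invariant function in general, and when it is nonconstant $\tilde f$ is simply \emph{not} the time-one map of any $C^1$ flow on $[0,1]$.  This mismatch is a Mather-type invariant, and making $\tilde f$ agree with the models on neighborhoods of the endpoints does not kill it --- the obstruction lives in how $\tilde f$ transports the field across the middle.  Your alternative route through a linearizing coordinate $\Phi$ lands on the same point: the ``distortion estimate'' you invoke is exactly the assertion that $1/\Phi'$ extends to a $C^1$ function on $[0,1]$, i.e.\ that a global $C^1$ generating field exists.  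To rescue the argument you would have to prove that time-one maps of $C^1$ flows on $[0,1]$ (with the stated boundary behavior) are $C^1$-dense among the maps in the hypothesis; this is plausible and likely true, but it is where all the work is, and you have not supplied it.

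For contrast, the paper never tries to embed $f$ in a flow.  It first $C^1$-approximates $f$ by a $C^2$ map $f_1$ whose second derivative changes sign only finitely often (say $l$ times), then lays down a grid $z_0<\cdots<z_{nN}$ by cutting one fundamental domain into $N$ equal pieces and pushing forward by $f_1$.  It sets $g(z_i)=z_{i+1}$, prescribes $g'(z_i)$ by the difference quotients $(z_{i+2}-z_{i+1})/(z_{i+1}-z_i)$ plus tiny corrections, and interpolates $g'$ \emph{monotonically} on each subinterval.  The $C^0$ estimate $\|g^N-f_1\|_0<r/2$ is immediate; the $C^1$ estimate comes from the chain rule together with the observation that the sequence $g'(z_i),\ldots,g'(z_{i+N})$ is $l$-quasi-monotone, which bounds the telescoping product $(g^N)'(z)$ between grid points.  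This hands-on route is less conceptual than yours but sidesteps the endpoint-regularity issue for generating vector fields entirely.
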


Since $\Diff_+^1(I)$ admits a quotient group topologically isomorphic to $\mathbb{R}^2$, it is not too hard to see that a dense subgroup of $\Diff_+^k(I)$ requires at least $3$ generators.  Our construction uses $10$ generators, and while this number should be easy to reduce, we are not sure of the least number of generators necessary.  The least number of topological generators for a topological group $G$ is sometimes called the \textit{topological rank} of $G$ (see \cite{GL}).

\begin{question}  What is the topological rank of $\Diff_+^1(I)$?  Of $\Diff_+^k(I)$ for $1<k\leq\infty$?
\end{question}

The next corollary is immediate from our main theorem, using a standard fragmentation argument.

\begin{cor}  There exists a finitely generated dense subgroup of $\Diff_+^1(\mathbb{S}^1)$.
\end{cor}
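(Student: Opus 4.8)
The plan is to realize the construction behind Theorem~\ref{thm:main} inside two small arcs of $\mathbb{S}^1$ and to glue the pieces together by fragmentation. Recall that the proof of Theorem~\ref{thm:main} produces, in particular, seven topological generators for the commutator subgroup $G$ of $\Diff_+^1(I)$, which by Tsuboi's perfectness theorem coincides with $\{g\in\Diff_+^1(I):g'(0)=g'(1)=1\}$. I would first fix two proper closed sub-arcs $A_1,A_2\subset\mathbb{S}^1$, neither equal to all of $\mathbb{S}^1$, whose interiors cover $\mathbb{S}^1$, and for $i=1,2$ put $H_i=\{f\in\Diff_+^1(\mathbb{S}^1):\supp f\subseteq A_i\}$, a closed subgroup of $\Diff_+^1(\mathbb{S}^1)$. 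The key observation is that $H_i$ is topologically isomorphic to $G$: choosing a $C^\infty$ diffeomorphism $\phi_i\colon A_i\to I$, the assignment $f\mapsto\phi_i\circ(f|_{A_i})\circ\phi_i^{-1}$ is an isomorphism of topological groups from $H_i$ onto $G$, the point being that a $C^1$ diffeomorphism of $\mathbb{S}^1$ which is the identity off the open arc $A_i^\circ$ is forced to have derivative $1$ at the two endpoints of $A_i$. Hence each $H_i$ is topologically generated by a seven-element subset $\Lambda_i$.

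With this in place I would set $\Gamma=\langle\Lambda_1\cup\Lambda_2\rangle$, a $14$-generated subgroup of $\Diff_+^1(\mathbb{S}^1)$, and show that $\overline{\Gamma}=\Diff_+^1(\mathbb{S}^1)$. Since $\langle\Lambda_i\rangle$ is dense in the closed subgroup $H_i$, the closed subgroup $\overline{\Gamma}$ contains $H_1$ and $H_2$, hence contains $\langle H_1,H_2\rangle$. Now apply the standard fragmentation lemma, which is valid in the $C^1$ category, to the open cover $\{A_1^\circ,A_2^\circ\}$: there is a neighborhood $\mathcal{N}$ of the identity in $\Diff_+^1(\mathbb{S}^1)$ such that every $f\in\mathcal{N}$ factors as $f=f_1f_2$ with $\supp f_i\subseteq A_i$, so $\mathcal{N}\subseteq\langle H_1,H_2\rangle$. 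A subgroup of a topological group containing a neighborhood of the identity is open, hence closed, and since $\Diff_+^1(\mathbb{S}^1)$ is connected (every orientation-preserving diffeomorphism of the circle is isotopic to the identity) this forces $\langle H_1,H_2\rangle=\Diff_+^1(\mathbb{S}^1)$. Therefore $\overline{\Gamma}=\Diff_+^1(\mathbb{S}^1)$, and $\Gamma$ is the desired finitely generated dense subgroup.

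The argument has no single difficult step, but two points deserve care. First, what is actually used is the sharper output of the proof of Theorem~\ref{thm:main} --- that the commutator subgroup $G$, rather than merely $\Diff_+^1(I)$, is topologically finitely generated --- because a diffeomorphism of $\mathbb{S}^1$ supported in an arc lies, after cutting along the arc, in $G$ and not in the larger group $\Diff_+^1(I)$. Second, one should verify the fragmentation lemma and the identification $H_i\cong G$ at the level of $C^1$ regularity and of the associated $C^1$ topologies; both are routine --- the fragmentation factors may be obtained by cutting off, with a fixed smooth bump function, a $C^1$ isotopy from the identity to $f$ --- but this is where the modest remaining work lies. One can in fact bypass fragmentation entirely: writing $\mathrm{Stab}(0)$ for the stabilizer of the point $0\in\mathbb{S}^1$, one has $\Diff_+^1(\mathbb{S}^1)=\mathrm{SO}(2)\cdot\mathrm{Stab}(0)$ and a short exact sequence $1\to G\to\mathrm{Stab}(0)\to\mathbb{R}\to1$, so the seven generators of $G$ together with two elements of $\mathrm{Stab}(0)$ mapping to a dense subgroup of $\mathbb{R}$ and one irrational rotation generate a dense subgroup of $\Diff_+^1(\mathbb{S}^1)$; the fragmentation route, however, is cleaner to state.
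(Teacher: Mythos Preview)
Your proposal is correct and follows the same fragmentation approach as the paper: cover $\mathbb{S}^1$ by two overlapping arcs, take finite topological generating sets for the groups of diffeomorphisms supported on each, and take their union. You are in fact more careful than the paper on one point --- the paper writes that $\Diff_+^1([a,b])$ is ``topologically isomorphic to $\Diff_+^1(I)$'', but as you observe, a $C^1$ circle diffeomorphism supported in a proper arc is forced to be tangent to the identity at the arc's endpoints, so the relevant subgroup is really $G$; fortunately Theorem~\ref{thm:diff(I)} supplies the needed finite topological generating set for $G$ directly, so neither argument is affected.
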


\begin{proof}  Write $\Diff_+^1(\mathbb{S}^1)=\Diff_+^1([a,b])\cdot\Diff_+^1([c,d])$ where $(a,b)$ and $(c,d)$ are subarcs of the circle whose union is the entire circle.  $\Diff_+^1([a,b])$ and $\Diff_+^1([c,d])$ are both topologically isomorphic to $\Diff_+^1(I)$, so they have finite topological generating sets $F_1$ and $F_2$ respectively by Theorem \ref{thm:main}.  Then $F_1\cup F_2$ is a finite topological generating set for $\Diff_+^1(\mathbb{S}^1)$.
\end{proof}

In \cite{GL}, the authors discuss the concept of \textit{local rank}.  The local rank of a separable topological group is the minimal number $n$ such that, within any neighborhood of identity, there exists a set of $n$ many topological generators for the group.  If no such $n$ exists, the local rank of the group is said to be infinite.  Examination of the proof of Theorem \ref{thm:main} reveals that the generators we construct may be chosen arbitrarily close to identity in the $C^1$ metric, so the local rank of $\Diff_+^1(I)$ is at most $10$.  In particular, $\Diff_+^1(I)$ and $\Diff_+^1(\mathbb{S}^1)$ have finite local rank.

 

 

In the second part of this paper, we are concerned with the question of the generic behavior of finite topological generating sets.  The question dates back to Schreier and Ulam (\cite{SU}), who showed that for every compact connected metrizable group $G$, the set of pairs $(g_1,g_2)\in G$ which generate a dense subgroup of $G$ has full Haar measure in $G^2$.  More recently, Winkelmann \cite{W} has proved the following striking result in the case of connected Lie groups $G$: if $G$ is amenable then there exists an $N$ such that the set of $n$-tuples $(g_1,...,g_n)$ which generate a dense subgroup has zero Haar measure for all $n<N$, while the set of $n$-tuples $(g_1,...,g_n)$ which generate a discrete subgroup has zero Haar measure for all $n\geq N$.  If $G$ is non-amenable, then the set of discrete finitely generated groups and the set of non-discrete finitely generated groups both have infinite measure, for every $n > 1$.

\medskip

In this paper we are primarily concerned with groups which are not locally compact and thus do not carry Haar measures.  As the natural alternative, we approach the question in terms of Baire category.  Recall that each group of the form $\Homeo_0(M)$ or $\Diff_0^k(M)$ for a compact manifold $M$ is a Polish (i.e. separable completely metrizable) topological group and thus satisfies the hypothesis of the Baire category theorem (here, $\Homeo_0(M)$ denotes the connected component of identity in the full homeomorphism group $\Homeo(M)$; similarly, $\Diff_0^k(M)$ denotes the connected component of identity in the full diffeomorphism group $\Diff^k(M)$).  We will say that a Polish topological group $G$ is \textit{generically topologically $n$-generated} if the set\\

\begin{center} $\Omega_n=\{(g_1,g_2,...,g_n)\in G^n:\langle g_1,g_2,...,g_n\rangle$ is dense in $G\}$
\end{center}
\vspace{.3cm}

\noindent is comeager in $G^n$ (i.e. if $\Omega_n$ contains a countable intersection of dense open subsets of $G^n$).  If a group is generically topologically $n$-generated, then by the Kuratowski-Ulam theorem, it is generically topologically $m$-generated for all $m\geq n$.

\medskip

As an example, each torus $\mathbb{T}^n=(\mathbb{R}/\mathbb{Z})^n$ is topologically $1$-generated, so $\Omega_1$ is non-empty.  This is a consequence of a classical theorem of Leopold Kronecker (\cite{Kr}), which asserts that $\Omega_1$ of $\mathbb{T}^n$ consists exactly of those tuples $(g_1,...,g_n)\in\mathbb{T}^n$ for which $\{1,g_1,...,g_n\}$ is a linearly independent set over the rationals.  But this characterization implies something stronger, that $\Omega_1$ is in fact comeager in $\mathbb{T}^n$.  Similarly, one may apply Kronecker's theorem to show not only that each additive group $\mathbb{R}^n$ is $(n+1)$-generated, but moreover that $\Omega_{n+1}$ is comeager in $\mathbb{R}^n$.  By contrast, Kechris and Rosendal (\cite{KR}) showed that the infinite permutation group $S_\infty$, endowed with its usual topology, is topologically $2$-generated, but since $S_\infty$ is a totally disconnected group, $\Omega_n$ cannot be dense in $S_\infty$, let alone comeager.  Rather, by direct argument it is easy to see that $\Omega_n$ is nowhere dense in $S_\infty$ for all $n$.

\medskip

Our main result on this topic is the following.

\begin{thm} \label{thm:generic}  $\Homeo_+(I)$ is generically topologically $2$-generated.
\end{thm}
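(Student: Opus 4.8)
The plan is to show that $\Omega_2$ is a dense $G_\delta$ subset of $\Homeo_+(I)^2$. Fix a complete metric $d$ making $\Homeo_+(I)$ a Polish topological group (for instance $d(\phi,\psi)=\|\phi-\psi\|_\infty+\|\phi^{-1}-\psi^{-1}\|_\infty$) and a countable dense set $\{h_j:j\in\N\}\subseteq\Homeo_+(I)$, and put
\[
U_{j,k}=\{(f,g)\in\Homeo_+(I)^2:\ d(w(f,g),h_j)<1/k\text{ for some word }w\}.
\]
For each fixed word $w$ the evaluation $(f,g)\mapsto w(f,g)$ is continuous, so each $U_{j,k}$ is open; and since $\{w(f,g):w\text{ a word}\}=\langle f,g\rangle$, we have $\Omega_2=\bigcap_{j,k}U_{j,k}$. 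Thus it suffices to prove that each $U_{j,k}$ is dense. (Equivalently, since each $U_{j,k}$ is an open superset of $\Omega_2$, it even suffices to prove that $\Omega_2$ is dense; but I will argue the formally weaker statement that one target may be hit at a time.) Concretely, the goal reduces to: given any $(f_0,g_0)\in\Homeo_+(I)^2$, any $h\in\Homeo_+(I)$, and any $\varepsilon,\delta>0$, find $f,g$ with $d(f,f_0),d(g,g_0)<\delta$ and a word $w$ with $d(w(f,g),h)<\varepsilon$.

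To do this I would first simplify the target. By the fragmentation lemma for $\Homeo_+(I)$ --- applied to a cover of $I$ by intervals of length $<\delta$ --- write $h=h^{(1)}h^{(2)}\cdots h^{(m)}$ with each $h^{(i)}$ supported in an interval of length $<\delta$; since $w_1(f,g)\cdots w_m(f,g)=(w_1\cdots w_m)(f,g)$, it is enough to produce a single pair $(f,g)$ near $(f_0,g_0)$ and words $w_i$ with each $w_i(f,g)$ sufficiently close to $h^{(i)}$, so that every target one must hit is supported in a tiny subinterval. The pair $(f,g)$ is then obtained from $(f_0,g_0)$ by a succession of perturbations of total $d$-size $<\delta$, accomplishing the following. (1) After possibly replacing $f_0$ by $f_0^{-1}$ and fixing a small scale $s$, modify $f_0$ on $[0,s]$ so that $0$ becomes an attracting fixed point of $f$ with a contracting fundamental domain near $0$; then high negative iterates of $f$ act as arbitrarily strong ``renormalizations'' near $0$. (2) Modify $g_0$ (and, if needed, modify $f$ further by a tiny amount) so that $f$ and $g$ have no common fixed point in $(0,1)$ and so that $\langle f,g\rangle$ acts minimally on $(0,1)$. (3) Modify $f$ and $g$ on a sequence of fundamental domains shrinking toward $0$, encoding at successively smaller scales rescaled copies of $h^{(1)},\dots,h^{(m)}$ together with a few auxiliary homeomorphisms used to select and transport them. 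The word $w_i$ attached to $h^{(i)}$ then (a) selects the encoded copy, (b) conjugates it by a high power of $f$ to blow it up to a standard small scale, and (c) conjugates the result by an element of $\langle f,g\rangle$ --- available from the minimality and flexibility of the action --- carrying that standard interval onto the support interval of $h^{(i)}$; with enough care this is within $\varepsilon/m$ of $h^{(i)}$. Stage (3) is a two-generator analogue of the ``encode small copies, then magnify'' mechanism behind Theorem \ref{thm:main}.

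The main obstacle is this perturbation argument, and in particular the tension between stages (2) and (3). On one hand, $(f_0,g_0)$ may carry no useful dynamics at all (for instance $f_0=g_0=\mathrm{id}$), and a small perturbation cannot remove ``robust'' common fixed points of $(f_0,g_0)$ in $(0,1)$ --- such as an attracting fixed point whose immediate basin is not small --- so all of the renormalizing and transitivity structure has to be manufactured from scratch within distance $\delta$. What rescues stage (2) is that near any robust common fixed point one can still perturb $f$ and $g$ so that they no longer share a fixed point there, allowing orbits to leak through; but checking that this genuinely yields a minimal action on all of $(0,1)$ --- with no invariant subinterval and no invariant Cantor set --- requires a careful local analysis around each such point. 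On the other hand, the encode/magnify/transport construction of stage (3) must be carried out with a single pair $(f,g)$ serving all $m$ targets simultaneously, and the data to be encoded are prescribed on subintervals that are themselves defined in terms of $f$, which is still being perturbed, so the perturbations must be executed in the right order for everything to be consistent --- a fixed-point-flavoured bookkeeping that is the technical heart of the proof. Finally, stage (2) is essential rather than a convenience: if $(f,g)$ preserved any proper subinterval $J\subsetneq I$, then $\langle f,g\rangle$ would lie inside the proper closed subgroup of homeomorphisms fixing an endpoint of $J$, and so could never be dense.
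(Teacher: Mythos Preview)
Your $G_\delta$ reduction and the idea of localizing all perturbations to a tiny interval near $0$ match the paper. The gap is in stage~(3)(c). If you encode a rescaled copy of $h^{(i)}$ and then conjugate it by some $\gamma\in\langle f,g\rangle$ carrying your standard interval to $\supp(h^{(i)})$, the result is close to $\gamma\,(\text{copy})\,\gamma^{-1}$, which has the right support but is a \emph{conjugate} of $h^{(i)}$ by an uncontrolled homeomorphism, not $h^{(i)}$ itself. Minimality gives approximate transitivity on points, not control over how $\gamma$ parametrizes the image interval; and any auxiliary ``transport maps'' produced by your own encode-then-blow-up mechanism are supported on the blown-up interval, so they cannot move that interval elsewhere. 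The fragmentation step is what forces this transport problem on you, and it buys nothing: without it you could aim for a single interval $[a,b]$ nearly exhausting $I$, where closeness outside $[a,b]$ is automatic since everything fixes $0$ and $1$.

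The paper sidesteps the distortion issue by encoding not the targets but a dense \emph{generating pair}. After perturbing $g$ so that $\tilde g(x)>x$ on $(0,y_0)$ with fundamental domains $[x_{n+1},x_n]\to 0$, one sets $\tilde f|_{[x_1,x_0]}=\phi$ and $\tilde f|_{[x_2,x_1]}=\tilde g^{-1}\psi\tilde g$ for a pair $(\phi,\psi)$ topologically generating $\Homeo_+([x_1,x_0])$, extended $\tilde g^2$-periodically. Then $\tilde f$ and $\tilde g\tilde f\tilde g^{-1}$ restricted to each $[x_{n+1},x_n]$ generate densely, and since conjugation is a group isomorphism, \emph{any} blow-up of such a pair is again a dense pair for the image interval --- the specific conjugating map is irrelevant. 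Using only ``no common fixed point in $(0,1)$'' (which in $C^0$ is arranged by an arbitrarily small perturbation; your worries about robust fixed points and invariant Cantor sets are unnecessary here), one finds $h\in\Gamma$ with $h(y_0)>1-\gamma$ and a word $\Phi$ stretching one $[h(x_{N+1}),h(x_N)]$ onto $[a,b]$ with $a<\gamma<1-\gamma<b$; a word in the resulting dense pair then approximates any given $\varphi$ on $[a,b]$, and hence on all of $I$. This in fact proves $\Omega_2$ itself is dense, stronger than the per-target statement you set out to show.
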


It is striking that finitely generated dense subgroups are ubiquitous at the $C^0$ level in $\Homeo_+(I)$, and yet, speaking empirically, to find even one $C^1$-dense finitely generated subgroup in $\Diff_+^1(I)$ presents a great challenge.  We remark that $\Homeo_+(I)$, despite being non-locally compact, is in some ways analogous to a compact connected topological group, in the sense that it always has finite diameter with respect to any left-invariant metric on the group{\footnote {See \cite{DH}, as well as \cite{R} for exposition on the consequences of this property.}}, and thus has bounded large scale metric geometry.  The diffeomorphism group $\Diff_+^1(I)$, by contrast, has a very complicated and certainly unbounded large scale geometry.  This fact may be reflected in the seeming paucity of finitely generated dense subgroups there.

\medskip

In this paper we also point out that $I$ is the unique compact connected manifold with boundary in which generic finitely generated groups of homeomorphisms are dense.  To state this more precisely, suppose $M$ is a compact manifold with boundary, and denote by $\Homeo_0(M,\partial M)$ the connected component of the identity in the group of homeomorphisms of $M$ which fix every point of the boundary $\partial M$.  Note that if $M=I$, then $\Homeo_0(M,\partial M)=\Homeo_+(I)$, and $\Homeo_0(M,\partial M)$ is generically topologically $2$-generated by our results.  By contrast, if $M\neq I$, we have the following.

\begin{thm}  Let $M$ be a compact connected manifold with boundary which is not homeomorphic to a closed interval.  Then the set $\Omega_n$ of all $n$-tuples which topologically generate $\Homeo_0(M,\partial M)$ is contained in a closed nowhere dense subset of $(\Homeo_0(M,\partial M))^n$ for all positive integers $n$.
\end{thm}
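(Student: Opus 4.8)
The plan is to exhibit, for $M \neq I$, a nonempty open subset $U \subseteq (\Homeo_0(M,\partial M))^n$ such that every tuple in $U$ generates a discrete (in fact, infinite cyclic or trivial on a region) subgroup, and more: to produce a proper closed nowhere dense set containing $\Omega_n$. The mechanism is \emph{local support}: because $M$ is not an interval, $M$ has at least one boundary component $C$ that is itself a manifold of positive dimension, or $M$ has a connected boundary $\partial M$ whose collar neighborhood contains room to slide supports past each other. Concretely, fix a small open ball $B \subseteq \operatorname{int}(M)$ and a homeomorphism $\theta \in \Homeo_0(M,\partial M)$ supported in $B$ with $\theta \neq \operatorname{id}$. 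Since $\dim M \geq 2$ (if $\dim M = 1$ then a compact connected $1$-manifold with boundary is exactly $I$), there is enough room in $\operatorname{int}(M)$ to choose $n$ pairwise disjoint open balls $B_1,\dots,B_n$ together with a further ball $B_0$ disjoint from all of them. Then the set
\[
U = \{(g_1,\dots,g_n) : \supp(g_i) \subseteq B_i \text{ for each } i\}
\]
is a nonempty set which, crucially, has \emph{nonempty interior is false} — so instead I refine the construction: I take $U$ to be the set of tuples each of whose entries restricts to a map supported \emph{off} the fixed ball $B_0$; this condition is open (a $C^0$-small perturbation of a homeomorphism fixing a neighborhood of $\overline{B_0}$ still fixes a smaller neighborhood only if we are careful — so in fact I use that the condition ``$g$ fixes a given point $p \in \operatorname{int} M$ together with a coordinate ball around it'' is \emph{closed}, not open).

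Because the genuinely open-dense phenomenon I want is the \emph{opposite} — I want to show $\Omega_n$ is \emph{small} — the cleaner route is: produce a single nonempty open set $V \subseteq (\Homeo_0(M,\partial M))^n$ disjoint from $\Omega_n$, and then observe that by homogeneity (the group acts transitively enough on itself, and on $n$-tuples, by conjugation and translation) the orbit of $V$ is open dense, hence $\Omega_n$ is contained in its closed nowhere dense complement. To build $V$: pick disjoint closed balls $D_1, \dots, D_n, D_{n+1}$ in $\operatorname{int}(M)$ (possible exactly because $\dim M \geq 2$ gives infinitely much room, whereas in $I$ one cannot have $n+1$ disjoint ``independent'' regions in the relevant sense). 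For a tuple $(g_1,\dots,g_n)$ with each $g_i$ equal to the identity on an open neighborhood of $D_{n+1}$, the group $\langle g_1,\dots,g_n\rangle$ fixes $D_{n+1}$ pointwise, so it cannot be dense (there exist elements of $\Homeo_0(M,\partial M)$ moving points of $D_{n+1}$, and ``fixing a point'' is a $C^0$-closed condition that no sequence from the subgroup can escape). The set of such tuples is closed with nonempty interior? No — again fixing a point is closed. The resolution, which is the actual key step: use that for each $i$ there is an open neighborhood $W_i$ of the identity in $\Homeo_0(M,\partial M)$ consisting of homeomorphisms whose support is contained in $M \setminus D_{n+1}$ — this IS open in the $C^0$ topology because ``$\supp(g) \subseteq M \setminus D_{n+1}$'' means ``$g = \operatorname{id}$ on the open set $\operatorname{int}(D_{n+1})$'', and while being the identity on an \emph{open} set is closed, one instead takes $W_i$ to be the set of $g$ with $g(D_{n+1}) \cap \partial(\text{slightly larger ball}) = \emptyset$ and $g$ permuting a fixed collection — here I will instead simply invoke that $\Homeo_0(M,\partial M)$ contains an open subgroup-like neighborhood; more honestly, the argument I will write uses the \emph{displacement} trick.

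The clean displacement argument, which I expect to be the technical heart: fix disjoint closed balls $D$ and $D'$ in $\operatorname{int} M$ (using $\dim M \geq 2$). Let $U$ be the set of all tuples $(g_1,\dots,g_n)$ such that each $g_i$ maps $D$ into $M \setminus D'$ \emph{and} is a composition of a homeomorphism supported in a fixed ball $E \supseteq D \cup D'$ — one checks $U$ is open (the conditions $g_i(D) \subseteq \operatorname{int}(M \setminus D')$ are open since $D$ is compact and $M \setminus D'$ is open) and nonempty. For such a tuple, every element $\gamma \in \langle g_1,\dots,g_n\rangle$ still satisfies $\gamma(D) \cap D' = \emptyset$ is false in general — so the honest invariant is: restrict attention to the subgroup $H \leq \Homeo_0(M,\partial M)$ of elements supported in $E$, note $\langle g_1,\dots,g_n\rangle \leq H$, note $H$ is a proper closed subgroup (elements of $\Homeo_0(M,\partial M)$ moving $\partial E$ are bounded away from $H$ in $C^0$ metric, since $\supp(h) \subseteq \overline E$ is a closed condition and its complement is nonempty and open because there is a point just outside $E$ that can be moved). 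Thus $\Omega_n \cap (H^n) = \emptyset$ for the relevant reason, giving $\Omega_n \subseteq (\Homeo_0(M,\partial M))^n \setminus (\operatorname{int}(H^n))$? This still needs $H^n$ to have interior, which it does not. Therefore the \textbf{final, correct plan} is this: I will show directly that $\Omega_n$ is contained in the closed set $F = \{(g_1,\dots,g_n) : \langle g_1,\dots,g_n\rangle \text{ does not move } \partial M\text{'s collar freely}\}$ — more precisely, I fix a boundary-parallel torus-or-sphere $\Sigma$ (the boundary of a collar of one component of $\partial M$; this exists and is a positive-dimensional closed manifold precisely because $M \neq I$) and observe that since $\partial M$ is fixed pointwise and $M \neq I$, there is a nonempty open set $V$ of $n$-tuples each of whose entries fixes $\Sigma$ \emph{setwise} but acts on $\Sigma$ through an orientation-reversing-forbidden, hence proper, subgroup — and crucially the restriction-to-a-collar map $\Homeo_0(M,\partial M) \to \Homeo(\text{collar})$ lets me transplant the $S^\infty$-type argument from the introduction. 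I expect the main obstacle to be making the open set genuinely open: the cleanest fix, which I will adopt, is to take $V$ = tuples $(g_i)$ with each $g_i$ displacing a fixed small ball $B$ entirely off itself ($g_i(B) \cap B = \emptyset$, an open condition), chosen so that additionally all $g_i$ lie in the (clopen-enough) set of homeomorphisms supported in a fixed proper compact region $K \subsetneq M$ with $K \neq M$ possible only because $\dim M \geq 2$ allows $M \setminus K$ to have nonempty interior; then $\langle g_i \rangle \subseteq \Homeo_K$, a proper closed subgroup whose complement is open and dense, and the orbit of $V$ under left-translation by $\Homeo_0(M,\partial M)^n$ is open dense with $\Omega_n$ avoiding it, so $\Omega_n$ lies in the closed nowhere dense complementary set. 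The one genuine subtlety — ensuring ``supported in $K$'' survives small perturbations — is handled by instead requiring only the open displacement condition on $B$ and noting every element of the generated group inherits a weaker but still non-dense-forcing displacement property; I will spell this out as the lemma that does the real work.
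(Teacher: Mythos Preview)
Your proposal never actually lands on a working argument. You cycle through half a dozen candidate open conditions --- disjoint supports, fixing a ball, support contained in a proper compact $K$, displacing a ball off itself --- and correctly diagnose each time why the condition is either not open or not inherited by the generated group, but you never resolve the tension. Your ``final, correct plan'' still relies on ``supported in $K$,'' which you yourself note is closed, and your fallback (each $g_i$ displaces a fixed ball $B$ off itself) is genuinely open but does \emph{not} pass to products: if $g_1(B)\cap B=\emptyset$ and $g_2(B)\cap B=\emptyset$ there is no reason $g_1g_2(B)\cap B=\emptyset$, and no weaker invariant survives to obstruct density. The promised ``lemma that does the real work'' is never stated, and I do not see what it could be along these lines.

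The missing idea is \emph{ping-pong}. The paper's Theorem~\ref{thm:ping-pong} produces, inside any chart of dimension $\geq 2$, an $n$-tuple $(f_1,\dots,f_n)$ together with an $r>0$ such that \emph{every} tuple in the $r$-ball around it generates a discrete free group. The conditions that make ping-pong work --- inclusions of the form $f_i(\overline{A_j}\cup\overline{B_j})\subset A_i$ for $j\neq i$ --- are open (compact set mapped into open set), and they \emph{do} propagate to all reduced words, forcing every nontrivial element to move some $A_j$ a definite distance. That is exactly the robust, open, group-invariant obstruction you were searching for. The second ingredient you are missing is how to make this open set \emph{dense}: here the boundary is essential. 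Since every element of $\Homeo_0(M,\partial M)$ fixes $\partial M$ pointwise, given any tuple $(g_1,\dots,g_n)$ and any $\epsilon>0$ one can find a small ball $U$ in the interior of $M$, close to a boundary point, on which every $g_i$ is $\epsilon$-close to the identity; one then redefines each $g_i$ on this ball to install ping-pong dynamics there. The perturbed tuple is $\epsilon$-close to the original and lies in an open set of tuples generating discrete subgroups. Thus the discrete-generating tuples contain an open dense set, and $\Omega_n$ sits inside its closed nowhere dense complement.
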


This theorem is shown using a standard ping-pong argument, which allows one to find open neighborhoods in a group whose elements always generate free discrete subgroups.  The same argument shows that $\Homeo_+(\mathbb{S}^1)$ is not generically $n$-generated for any $n$, nor is $\Homeo_0(M)$ for any compact manifold $M$ without boundary.  This argument does not apply on the interval, which stands alone among the compact connected manifolds in this regard.

 \vspace{1cm}
 
 \section{Finiteness of the Rank of Diff$_{+}^1(I)$}

We begin by showing that the topological commutator subgroup of $\mathrm{Diff}_+^1(I)$ is finitely generated.

\begin{thm} \label{thm:diff(I)}  Let $G=\{f\in\mathrm{Diff}_+^1(I):f'(0)=f'(1)=1\}$, endowed with the subgroup topology from $\Diff_+^1(I)$.  Then $G$ is topologically $7$-generated.
\end{thm}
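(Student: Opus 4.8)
The plan is to write down explicit diffeomorphisms $f,g,u,v,h,\phi,\psi\in G$ and prove that $\Gamma:=\langle f,g,u,v,h,\phi,\psi\rangle$ is dense. Since $\mathrm{Diff}_+^1(I)$ is separable and metrizable, so is $G$, and it is enough to guarantee that the closure $\overline\Gamma$ is all of $G$. I would first cut down the problem: it is elementary that every element of $G$ is a product of two elements of $G$ with no fixed point in $(0,1)$. (Given $f\in G$, pick a diffeomorphism $f_1\in G$ whose graph stays strictly below $\min\{x,f(x)\}$ on $(0,1)$, with $f_1'(0)=f_1'(1)=1$; then $f_1$ and $f_1^{-1}f$ are both fixed-point-free, and $f_1^{-1}f\in G$ since $(f_1^{-1}f)'(0)=f'(0)/f_1'(0)=1$ and similarly at $1$.) Hence, fixing a sequence $(\eta_n)_{n\ge 1}$ dense among the fixed-point-free elements of $G$, it suffices to arrange that for every $n$ and every $\varepsilon>0$ some word in the generators lies within $\varepsilon$ of $\eta_n$ in the $C^1$ metric: $\overline\Gamma$ is then a subgroup containing every fixed-point-free element, hence every product of two such, i.e.\ all of $G$. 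Each $\eta_n$ satisfies $\eta_n'(0)=\eta_n'(1)=1$, so Lemma~\ref{lem:introroot} applies and yields, for any prescribed $r>0$, a diffeomorphism $\rho$ with $d_{C^1}(\rho,\mathrm{id})<r$ together with an exponent $N$ with $d_{C^1}(\rho^{N},\eta_n)<r$.

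The geometric scaffolding consists of pairwise disjoint open intervals (``slots'') $I_1,I_2,\dots\subset(0,1)$ accumulating at a single interior point $p$, together with a shift diffeomorphism $\phi\in G$ satisfying $\phi(I_{k+1})=I_k$ for all $k$ and having $p$ as a parabolic fixed point. Two properties are decisive. First, a diffeomorphism supported in one slot can be transported to any other slot, and ultimately magnified, by conjugation by a power of $\phi$ together with a fixed scaling $\Theta_1\colon I\to I_1$. Second --- and this is why we invoke Lemma~\ref{lem:introroot} rather than placing $\eta_n$ directly --- because iterates of a parabolic germ have uniformly bounded distortion, conjugating a $C^1$-small diffeomorphism by $\phi^{\pm j}$, or by $\Theta_1$, keeps it $C^1$-small with a bound independent of $j$. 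Using this, one can encode in the $k$-th slot a scaled copy of an approximate root $\rho_k$ whose $C^1$-size $r_k$ tends to $0$ as fast as we like, so that the diffeomorphism which equals that copy on $I_k$ for every $k$ and the identity elsewhere is a genuine element of $G$.

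Granting this, the generators play the roles indicated in the introduction. Roughly, the four generators $f,g,u,v$ jointly encode all of the slot-copies --- one builds a diffeomorphism carrying, in slot $I_k$, a copy of $\rho_k$, where $(\rho_k)_k$ runs over approximate roots of the $\eta_n$ to every precision $1/m$ --- and they are arranged with enough redundancy that, after the fact, an individual slot-copy can be singled out by a suitable word. The fifth generator $h$ is used to erase the finitely many slot-copies large enough to obstruct that isolation (the remaining ones are harmless once $r_k$ decays fast relative to the exponents that will be used); the outcome is that, inside $\Gamma$ and to arbitrary $C^1$-precision, we obtain the bare slot-$I_k$ copy of a prescribed $\rho_k$. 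Raising this to the $N$-th power, and then conjugating by suitable powers of $\phi$ and $\psi$ to transport and expand the result onto a neighborhood of the support of $\eta_n$, produces a word within $\varepsilon$ of $\eta_n$. The two ``magnifying'' generators $\phi,\psi$ are chosen parabolic at the relevant fixed points so that all of these operations stay inside $G$; perfectness of $G$ (Tsuboi) and the endpoint normalization above are used again here.

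The step I expect to be hardest is the uniform $C^1$ bookkeeping that chains these pieces together. For a given target one must fix $r$ first, which fixes the exponent $N$, and then extract the relevant slot-copy of $\rho$ accurately enough that the subsequent $N$-th power is still within $\varepsilon$ --- that is, to accuracy comparable to $\varepsilon/N$ or finer. This is possible only because $r$ may be taken as small as we wish independently of the target, and because every transport, erasure, and magnification step above has distortion controlled uniformly by the parabolic bounded-distortion estimate. Designing the encoding carried by $f,g,u,v,h$ so that arbitrary finite truncations of the sequence of slot-copies are reachable by words while the full diffeomorphism remains $C^1$ and lies in $G$ is the technical heart of the proof; everything else is assembly and estimation.
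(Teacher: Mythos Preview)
Your high-level architecture matches the paper's --- seven generators with the roles ``encode, erase, magnify,'' and Lemma~\ref{lem:introroot} to keep the encoded data $C^1$-small --- but there is a real gap at the erasure step, precisely the one you flag as the technical heart. You take the targets $\eta_n$ to be fixed-point-free elements of $G$ and propose to lay down, in slot $I_k$, an approximate root $\rho_k$; then $h$ is to ``erase the finitely many slot-copies large enough to obstruct'' isolating one slot, with $f,g,u,v$ ``arranged with enough redundancy'' to permit this. No mechanism is given, and none is apparent: once a word $w\in\Gamma$ acts nontrivially on a slot $I_j$, every further word in $w$ and the other generators still acts on $I_j$; conjugation by powers of $h$ moves supports around inside a slot but cannot annihilate them. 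Nothing in your setup forces any word to be the identity on a prescribed finite set of slots while remaining close to $\rho_k^{N}$ on another.

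The paper's fix is to choose the dense sequence $(\eta_n)$ not among fixed-point-free maps but among \emph{commutators} $[\omega_1\omega_2,\omega_3\omega_4]$ with each $\omega_i\in S$ fixed-point-free; Tsuboi's perfectness theorem is invoked exactly here, to guarantee these commutators are still dense in $G$ (not at the magnification step, where you place it). The generators $f,g,u,v$ then carry slot-wise approximate roots of $\omega_1,\omega_2,\omega_3,\omega_4$ respectively, and the working word is the commutator $X_n=[h^{p}f^{M}g^{M}h^{-p},\,u^{M}v^{M}]$. Now erasure is automatic: on any previously handled slot $I_j$, a sufficiently large $p$ pushes the support of $h^{p}f^{M}g^{M}h^{-p}$ off that of $u^{M}v^{M}$ inside $I_j$, so their commutator is the identity there. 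This commutator trick is the reason four encoding generators (not one) are needed, and it is the missing idea in your outline. With this change --- and with the slots accumulating at $0$ rather than at an interior point, which is the paper's choice but is largely cosmetic --- the remainder of your sketch (the inductive choice of slot data small enough relative to the already-fixed exponents $M_n,m_n,p_{n-1}$) lines up with the paper's argument.
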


   {\bf Proof.}  By Lemma \ref{lem:tsuboi}, $G=\overline{[G,G]}=\overline{[\mathrm{Diff}_+^1(I),\mathrm{Diff}_+^1(I)]}$.  Let $$S = \{f\in G \ | \ \mathrm{Fix}(f)\cap (0,1) = \emptyset \}.$$  We observe that $S$ consists of exactly those maps satisfying the hypothesis of Lemma \ref{lem:introroot}, and that $S^2=G$.
 
 \medskip
 
 We will prove that $G$ is generated by seven diffeomorphisms $\phi , \psi , f, g, u, v$ and $h$ by constructing these diffeomorphisms explicitly.
 
 \medskip
 
 Let $I_n = (a_n, b_n), n\geq 0$ be open intervals in $(0,1)$ with mutually disjoint closures such that $(a_n)_{n\geq 0}$ is decreasing and $\displaystyle \lim _{n\to \infty}a_n = 0$. Let also $c_n\in(a_n,b_n), n\geq 0$.  Let $(J_n)_{n\geq0}$ be open mutually disjoint intervals in $(0,1)$ such that $\overline{I_n}\subseteq J_n$.

  \medskip
  
  We choose $\phi , \psi \in G$ such that the following conditions hold:
  
  \medskip
  
  (a-i) $\phi (x) > x, \forall x\in (0,1)$;
  
  \medskip
  
  (a-ii) $\psi (c_0) = c_0$, moreover, $\psi (x) > x, \forall x\in (0, c_0)$ and $\psi (x) < x, \forall x\in (c_0, 1)$; and
  
  \medskip
  
   (a-iii) $\phi ^{-1}(a_n) = a_{n+1}, \phi ^{-1}(b_n) = b_{n+1}, \phi ^{-1}(c_n) = c_{n+1}, \forall n\geq 0$.
	
	   \medskip
   
   \begin{figure}[h!]
  \includegraphics[width=3.5in,height=3.5in]{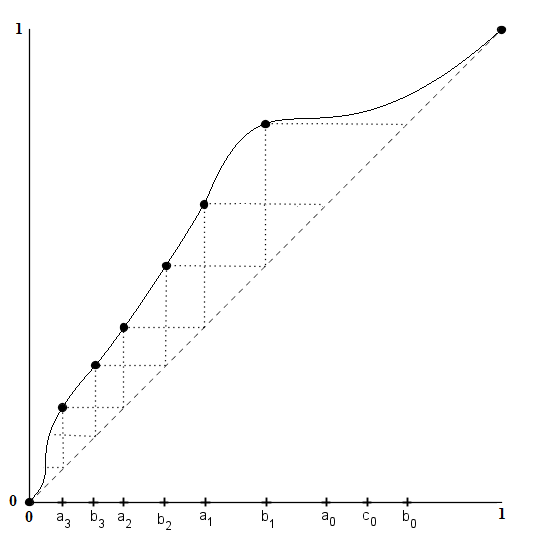}
\caption{the map $\phi $.}
\label{labelname}
\end{figure}

 \begin{figure}[h!]
  \includegraphics[width=3.5in,height=3.5in]{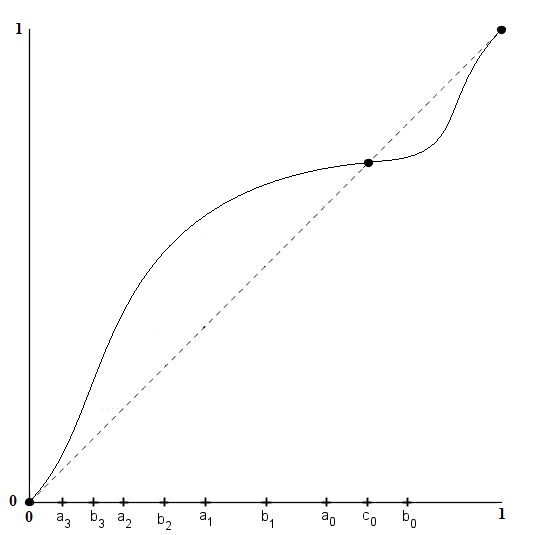}
\caption{the map $\psi $.}
\label{labelname}
\end{figure}
   
	For a natural number $m$, let us denote by $\Phi_n^m$ the map $\phi^n\psi^{-m}\phi^{-n}$.  In the construction that follows, for large values of $m$, the intuitive purpose of $\Phi_n^m$ is to dilate maps (via conjugation) which are supported on $[a_n,b_n]$, to approximate maps which are supported on $[0,1]$.
		
  \medskip
  
  Now we start constructing $f, g, u, v$ and $h$. For this, let $\eta _1, \eta _2, \dots $ be a dense sequence in the subset $\{[\omega _1\omega _2, \omega _3\omega_4] : \omega _i\in S, 1\leq i\leq 4\}$ where we consider the standard $C^1$ metric on $G$.  Let also the sequence be enumerated in such a way that each map $\eta_n$ is listed infinitely many times.  Note that $\overline{\langle \eta_n:n\geq1\rangle}=\overline{[S^2,S^2]}=G$ where the latter equality follows from Lemma \ref{lem:tsuboi}.  Therefore it suffices for our construction to guarantee that each map $\eta_n$ lies in the closure of the group $\langle\phi,\psi,f,g,u,v,h\rangle$.
	
	\medskip
	
	We let $f(x) = g(x) =u(x) = v(x) = x, \forall x\in  I \backslash \displaystyle \mathop{\sqcup }_{n\geq 1}I_n$, and $h(x)=x, \forall x\in I \backslash \displaystyle \mathop{\sqcup }_{n\geq 1}J_n$.  We will complete the definition of $f, g, u, v$ in the intervals $I_n = (a_n, b_n), n\geq 1$, and $h$ in the intervals $J_n$, inductively. 
	Let $(\epsilon _n)_{n\geq 1}$ and $(\delta _n)_{n\geq1}$ be sequences of positive numbers decreasing to zero.  Throughout the proof, if we state that a map $f$ is $\alpha$-close to a map $g$ on an interval $J$, we mean that $f$ and $g$ are uniformly $\alpha$-close in the $C^1$-metric on $J$, i.e. $\displaystyle\sup_{x\in J}|f'(x)-g'(x)|<\alpha$.
  
  \medskip
	
	In the interval $I_1$ we choose $f,g,u,v$ so that:
	
	\medskip
	
	(b-i) $f'(a_1) = g'(a_1) = u'(a_1) = v'(a_1) = 1$ and $f'(b_1) = g'(b_1) = u'(b_1) = v'(b_1) = 1$;
	
	\medskip
	
	(b-ii) $f,g,u,v$ are $\delta_1$-close to identity on the interval $I_1$; and
	
	\medskip
	
	(b-iii) $\Phi_1^{m_1}[f^{M_1}g^{M_1},u^{M_1}v^{M_1}](\Phi_1^{m_1})^{-1}$ is $\epsilon_1$-close to $\eta_1$ on the interval $\Phi_1^{m_1}(I_1)$ for some sufficiently big integers $M_1$ and $m_1$.
	
	\medskip
	
We may achieve condition (b-iii) for sufficiently big $M_1$ via Lemma \ref{lem:introroot}.  We define $h$ so that $h'(x)=1$ if $x$ is an endpoint of $J_1$; $h$ is $\delta_1$-close to identity on $J_1$; and $h(x)>x$ for all $x\in J_1$.  Note that for some sufficiently large integer $p_1$, the supports of $h^{p_1}fh^{-p_1}$ and $h^{p_1}gh^{-p_1}$ in $J_1$ will each be disjoint from the supports of $u$ and $v$ in $J_1$.  It follows that the commutator $[h^{p_1}f^{M_1}g^{M_1}h^{-p_1},u^{M_1}v^{M_1}]$ acts as the identity on $J_1$.
	
  \medskip	
  
  Suppose now the maps $f, g, u$ and $v$ are defined on $\displaystyle \mathop{\sqcup }_{1\leq i\leq n}I_i$ and $h$ is defined on $\displaystyle \mathop{\sqcup}_{1\leq i\leq n}J_i$	so that
  
  \medskip
  
   (c-i) $f'(x) = g'(x) = u'(x) = v'(x) = 1, \forall x\in \{a_1, \dots , a_n\}\cup \{b_1, \dots , b_n\}$;
   
  \medskip
  
	(c-ii) $f,g,u,v,h$ are $\delta_i$-close to identity on $J_i$, $1\leq i\leq n$;
	
	\medskip
	
	(c-iii) the map $\Phi_n^{m_n}[h^{p_{n-1}}f^{M_n}g^{M_n}h^{-p_{n-1}},u^{M_n}v^{M_n}](\Phi_n^{m_n})^{-1}$ is $\epsilon_n$-close to $\eta_n$ on the interval $\Phi_n^{m_n}(I_n)$, for some integers $p_{n-1}, m_n, M_n$; and
	
	\medskip
	
	(c-iv) for some integer $p_n>p_{n-1}$, $\Phi_1^{m_1}[h^{p_n}f^{M_1}g^{M_1}h^{-p_n},u^{M_1}v^{M_1}](\Phi_1^{m_1})^{-1}$ is the identity on $\displaystyle \mathop{\sqcup}_{i\leq i\leq n}I_i$.
   
   \medskip 
   
  Then we define the diffeomorphisms $f, g, u, v$ on the interval $I_{n+1}$ such that
  
  \medskip
  
  (d-i) $f'(x) = g'(x) = u'(x) = v'(x) = 1$ for all $x\in \{a_{n+1}, b_{n+1}\}$;
  
  \medskip
  
  (d-ii) $f, g, u, v$ are $\delta_{n+1}$-close to the identity on $I_{n+1}$; and
  
   \medskip
	
	(d-iii) the map $\Phi_{n+1}^{m_{n+1}}[f^{M_{n+1}}g^{M_{n+1}},u^{M_{n+1}}v^{M_{n+1}}](\Phi_{n+1}^{m_{n+1}})^{-1}$ is $\epsilon_{n+1}$-close to $\eta_{n+1}$ on the interval $\Phi_{n+1}^{m_{n+1}}(I_{n+1})$, for some integers $m_{n+1},M_{n+1}$.
	
	\medskip
	
	We complete the inductive construction by choosing $h$ on $J_{n+1}$ in such a way that $h$ is $\delta_{n+1}$-close to identity on $J_{n+1}$ and also tangent to identity at the endpoints of $J_{n+1}$, and $h(x)>x$ $\forall x\in J_{n+1}$.  We also require $h$ be so close to identity on $J_{n+1}$ that the map $$\Phi_{n+1}^{m_{n+1}}[h^{p_n}f^{M_{n+1}}g^{M_{n+1}}h^{-p_n},u^{M_{n+1}}v^{M_{n+1}}](\Phi_{n+1}^{m_{n+1}})^{-1}$$ remains $\epsilon_{n+1}$-close to $\eta_{n+1}$ on $\Phi_{n+1}^{m_{n+1}}(I_{n+1})$.
	
	\medskip
	
	We note that for some sufficiently large integer $p_{n+1}>p_n$, the supports of $h^{p_{n+1}}fh^{-p_{n+1}}$ and $h^{p_{n+1}}gh^{-p_{n+1}}$ in $J_{n+1}$ will each be disjoint from the supports of $u$ and $v$ in $J_{n+1}$, and thus the commutator $$\Phi_{n+1}^{m_{n+1}}[h^{p_{n+1}}f^{M_{n+1}}g^{M_{n+1}}h^{-p_{n+1}},u^{M_{n+1}}v^{M_{n+1}}](\Phi_{n+1}^{m_{n+1}})^{-1}$$ will act trivially on $J_{n+1}$, preserving the inductive hypothesis. 
   
   
   
   
   \medskip

 By induction, we extend the maps $f, g, u, v, h$ to the whole interval $I$.  Let $X_n=X_n(\phi,\psi,f,g,u,v,h)=[h^{p_{n-1}}f^{M_n}g^{M_n}h^{-p_{n-1}},u^{M_n}v^{M_n}]$.  By construction, the word $\Phi_n^{m_n}X_n(\Phi_n^{m_n})^{-1}$ is:

\begin{enumerate}
		\item $\epsilon_n$-close to $\eta_n$ on the interval $[\Phi_n^{m_n}(a_n),\Phi_n^{m_n}(b_n)]$;
		\item the identity on $[\Phi_n^{m_n}(b_n),1]$; and
		\item $\gamma_n$-close to identity on $[0,\Phi_n^{m_n}(a_n)]$, where $\gamma_n$ depends only on $\delta_n$ and $m_n$, and where $\gamma_n\rightarrow0$ as $\delta_n\rightarrow0$.
\end{enumerate}

By choosing $(m_n)$ large enough we may guarantee that $\eta_n$ is $\epsilon_n$-close to identity on the intervals $[\Phi_n^{m_n}(b_n),1]$ and $[0,\Phi_n^{m_n}(a_n)]$.  Then, by choosing $(\delta_n)$ small enough, we guarantee that $\Phi_n^{m_n}X_n(\Phi_n^{m_n})^{-1}$ is $\epsilon_n$-close to $\eta_n$ on $I$.  Since each $\eta_n$ is listed infinitely many times and $\epsilon_n\rightarrow0$, this shows that $\eta_n$ is in the closure of the group generated by $\phi,\psi,f,g,u,v$ and $h$.  $\square $
   
  \bigskip
   
   We would like to describe the construction of the proof of Theorem \ref{thm:diff(I)} briefly in words. At each of the domains $I_n$ the maps $f, g, u,v,h$ generate some words $X_n$, each of which is a small copy of a map close to $\eta _n$. To dilate this small copy we use the maps $\phi $ and $\psi $. $\phi $ moves the domain $I_n$ to $I_0$ and $\psi ^m$ dilates the domain $I_0$ to the interval containing $(\alpha _n , 1- \alpha _n)$ where $\alpha _n$ tends to zero. The word $X_n(f,g)$ has been realized as a commutator of a word $Z_n(f,g,u,v)$ and the conjugate of a word $Y_n(f,g,u,v)$ by some power of $h$. As a result of this we are able to make an arrangement that at all the intervals $I_i, i < n$ conjugating by the chosen power of $h$ disjoints the supports of $Y_n(f,g,u,v)$ and $Z_n(f,g, u,v)$ while $h$ itself is very close to the identity in the $C^1$ norm. 
	
	\bigskip
   
 {\bf Proof of Theorem \ref{thm:main}.} We have shown that $G$ is topologically generated by $7$ maps $\phi,\psi,f,g,u,v,h$, and now as a consequence we show that $\mathrm{Diff}_+^1(I)$ is topologically $10$-generated.  We note that $G$ is a closed normal subgroup in $\mathrm{Diff}_+^1(I)$, and that the quotient $\mathrm{Diff}_+^1(I)/G$ is topologically isomorphic to the additive group $\mathbb{R}^2$.  Since $\mathbb{R}^2$ is topologically 3-generated, the claim follows from the following simple lemma; for the reader's convenience we present its proof.
 
 \medskip
 
 \begin{lem}\label{lem:rank} Let $H$ be a topological group and $N$ be a normal subgroup. Then $$\mathrm{topological \  rank}(H) \leq \mathrm{topological \ rank}(H/N) + \mathrm{topological \  rank}(N).$$   
 \end{lem}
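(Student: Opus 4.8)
If either $H/N$ or $N$ has infinite topological rank the inequality is vacuous, so the plan is to assume that $m=\mathrm{topological\ rank}(H/N)$ and $n=\mathrm{topological\ rank}(N)$ are both finite and to exhibit an explicit topological generating set for $H$ of size $m+n$.  First I would fix a topological generating set $\bar g_1,\dots,\bar g_m$ for $H/N$ and choose lifts $g_1,\dots,g_m\in H$ under the quotient homomorphism $q\colon H\to H/N$; independently, I would fix a topological generating set $h_1,\dots,h_n$ for $N$ with respect to the subspace topology it inherits from $H$.  The claim to prove is then that the $m+n$ elements $g_1,\dots,g_m,h_1,\dots,h_n$ generate a dense subgroup $\Gamma$ of $H$, which immediately yields $\mathrm{topological\ rank}(H)\le m+n$.

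To verify the claim, let $U\subseteq H$ be an arbitrary nonempty open set; I must produce a point of $\Gamma$ inside $U$.  The key technical input is that $q$ is an open map: for any open $V\subseteq H$ the preimage $q^{-1}(q(V))=VN$ is a union of translates of $V$, hence open, so $q(V)$ is open in the quotient topology.  Thus $q(U)$ is a nonempty open subset of $H/N$, and since $q(\langle g_1,\dots,g_m\rangle)=\langle\bar g_1,\dots,\bar g_m\rangle$ is dense in $H/N$, there is an element $\gamma\in\langle g_1,\dots,g_m\rangle\subseteq\Gamma$ with $q(\gamma)\in q(U)$.  Picking $u\in U$ with $q(u)=q(\gamma)$, the element $\gamma^{-1}u$ lies in $N$, so $\gamma^{-1}U\cap N$ is a nonempty relatively open subset of $N$.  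Because $\langle h_1,\dots,h_n\rangle$ is dense in $N$, it meets this set, giving $\delta\in\langle h_1,\dots,h_n\rangle\subseteq\Gamma$ with $\delta\in\gamma^{-1}U$.  Then $\gamma\delta\in U\cap\Gamma$, and since $U$ was arbitrary we conclude $\overline{\Gamma}=H$.

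This argument has no genuinely hard step, but the two places where I would be careful — and which are exactly the points where the argument would fail without the topological-group hypothesis — are (i) the openness of the quotient homomorphism $q$, which is what allows density in $H/N$ to be promoted back up to $H$, and (ii) the fact that the topology making $N$ a topological group here is the subspace topology, so that a subset dense in $N$ meets every nonempty relatively open subset of $N$.  Both are standard facts about topological groups, so the proof should be short and self-contained.
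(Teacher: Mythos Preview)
Your proof is correct and follows essentially the same route as the paper: lift topological generators of $H/N$, adjoin topological generators of $N$, and show density by first approximating modulo $N$ (using openness of the quotient map, which in the paper appears as the observation that $V=UN$ is open) and then approximating within $N$ via the subspace topology. The only cosmetic difference is that the paper phrases density as hitting every translate $gU$ of a neighborhood of the identity rather than every nonempty open set, but the argument is the same.
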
 

 \medskip
 
 {\bf Proof.} For $x\in H, [x]$ will denote the corresponding element in $H/N$. We may and will assume that the groups $H/N$ and $N$ have finite topological ranks. Let $\mathrm{topological \ rank}(H/N) = p, \mathrm{topological \  rank}(N) = q$, and $\langle [g_1], ... ,[g_p]\rangle = H/N, \langle a_1, ... , a_q\rangle = N.$  

 \medskip
 
 We will prove that $\langle g_1, ... , g_p, a_1, ... , a_q \rangle = H$. Let $g\in H$ be an arbitrary element, and $U$ be an arbitrary non-empty open neighborhood of $1\in H$. Consider the open set $V :=UN$ in $H$. Let $V'$ be the projection of $V$ in $H/N$. We can  view $V'$ as an open neighborhood of the identity in $H/N$. Then there exists a word $w([g_1], ... , [g_p])$ such that $$[g]^{-1}w([g_1], ... , [g_p])  \in V'.$$

 \medskip
 
 This means that $g^{-1}w(g_1, ... , g_p) \in V = UN$. Let  $g^{-1}w(g_1, ... , g_p) = fn$, where $f\in U, n\in N$. Then there exists an open neighborhood $A$ of the identity in $N$ such that $fA \subset U$. 

 \medskip
 
 Now, we can find a word $X(a_1, ... , a_q)$ such that $nX(a_1, ... , a_q) \in A$. Then $g^{-1}w(g_1, ... , g_p)X(a_1, ... , a_q) \in U$.

 \medskip
 
Since $U$ is arbitrary, we are done. $\square $ 

\medskip

We remark that the analogue of Lemma \ref{lem:rank} for local rank is also true; see Lemma 4.5 in \cite{GL}.  Thus $\Diff_+^1(I)$ is locally topologically finitely generated.

 \bigskip
 
 \section{Roots of Diffeomorphisms}
 
  In this section we will prove Lemma \ref{lem:introroot} (represented here as Lemma \ref{lem:roots}).  We will need the following.
  
  \medskip
  
  \begin{defn} Let $l\geq 1$. A finite sequence $(t_i)_{1\leq i\leq N}$ of real numbers is called {\em $l$-quasi-monotone} if there exists $1 < i_1 \leq \dots \leq i_s < N$ for some $s\in \{1, \dots ,l\}$ such that each of the subsequences $(t_{i_j}, \dots , t_{i_{j+1}})$ is monotone for every $0\leq j\leq s$, where $i_0 = 1, i_{s+1} = N$. A 1-quasi-monotone sequence will be simply called {\em quasi-monotone}.
  \end{defn}
  
  \medskip  
 
 \begin{lem}\label{lem:roots} Let $f\in \mathrm{Diff}_{+}(I), \ f'(0) = f'(1) = 1$ and $r > 0$. Let also $f$ has no fixed point in $(0,1)$. Then there exists a natural number $N\geq 1$ and $g\in \mathrm{Diff}_{+}(I)$ such that $||g||_1 < r$ and $||g^N-f||_1 < r$. 
 \end{lem}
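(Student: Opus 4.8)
The plan is to build $g$ as a $C^1$-small perturbation of an "$N$-th root" of $f$ constructed by chopping the interval $I$ into $N$ pieces on which $f$ behaves like a small translation in an appropriate coordinate, and pasting together the pieces carefully at the junctions so that the derivative stays controlled. First I would fix, without loss of generality, that $f(x)>x$ on $(0,1)$ (the case $f(x)<x$ being symmetric via conjugation by $x\mapsto 1-x$), and pick a point $p\in(0,1)$. The fundamental domain $[p,f(p)]$ for the action of $\langle f\rangle$ on $(0,1)$ lets one write $(0,1)=\bigsqcup_{k\in\Z} f^k([p,f(p)))$, and any choice of homeomorphism $\sigma\colon[p,f(p)]\to[0,1/N]$ with $\sigma(p)=0$, $\sigma(f(p))=1/N$ extends to a conjugacy $\Sigma$ between $f|_{(0,1)}$ and translation by $1/N$ on $\R$; then $g_0:=\Sigma^{-1}\circ(\,\cdot+1/N^2\,)\circ\Sigma$ satisfies $g_0^N=f$ on $(0,1)$ tautologically, extended by the identity (hence fixing) $0$ and $1$. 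The real work is to choose $\sigma$ (equivalently, to choose the conjugacy and the root) so that $g_0$ is genuinely $C^1$ with $g_0'(0)=g_0'(1)=1$ and $\|g_0-\mathrm{id}\|_1$ small, and this is exactly where the hypotheses $f'(0)=f'(1)=1$ and the quasi-monotonicity machinery enter: near the endpoints, $f$ is $C^1$-tangent to the identity, so one has good control on $\log f'$ along orbits, and one arranges the root's derivative to interpolate smoothly.

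The key steps, in order, would be: \textbf{(1)} Reduce to $f(x)>x$ on $(0,1)$ and set up the orbit/fundamental-domain picture. \textbf{(2)} Observe that $g^N$ being $r$-close to $f$ does not require $g^N=f$ exactly, which gives slack: it suffices to produce $g$ with $\|g\|_1<r$ whose $N$-th power agrees with $f$ outside a small neighborhood of $\{0,1\}$ and is merely $C^1$-close to $f$ there. So fix small $a>0$ with $f$ very close to the identity on $[0,a]\cup[1-a,1]$ (possible since $f'=1$ at the endpoints), work with the root construction above on the "middle" region where $f$ is uniformly expanding away from fixed points, and on $[0,a]$, $[1-a,1]$ simply take $g$ to be the identity (or a tiny interpolating bump), accepting the resulting $C^1$-error $\le\sup_{[0,a]\cup[1-a,1]}|f'-1|<r$. \textbf{(3)} On the middle region, estimate $\|g_0\|_1$: since $g_0$ displaces each point by a $1/N$ fraction (in the conjugated coordinate) of what $f$ does, one shows $|g_0(x)-x|\to0$ uniformly as $N\to\infty$, and—using that the conjugating coordinate can be chosen bi-Lipschitz with controlled constants because $f$ has no fixed point there—that $|g_0'(x)-1|$ is small for $N$ large. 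This is where one invokes the $l$-quasi-monotonicity of the derivative sequence $(\log f'(f^k(p)))$ along an orbit to bound how much $g_0'$ can deviate from $1$: the point is that $\prod g_0'$ telescopes to $f'$, and distributing the total logarithmic variation over $N$ steps makes each step $O(1/N)$ except for a bounded number of "turning" steps, whose contribution one smooths out. \textbf{(4)} Glue the middle construction to the identity on the end intervals across thin collars, keeping $C^1$-norms additive and small, and conclude.

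The main obstacle I expect is \textbf{Step 3}: controlling $\|g_0'-1\|_\infty$, i.e. the $C^1$ (not merely $C^0$) smallness of the constructed root. The naive $N$-th root in a conjugated coordinate is only as smooth as the conjugacy $\Sigma$, and $\Sigma$ is typically \emph{not} $C^1$ even when $f$ is—Sternberg-type linearization obstructions mean one cannot expect a smooth conjugacy to a translation. The fix is to not insist $g_0^N=f$ exactly but to build $g$ directly as a diffeomorphism that is the identity near the endpoints, translates by $\approx\tfrac1N(f(p)-p)$ on a central fundamental domain, and whose $N$-fold composition is $C^1$-close to $f$ on the middle—the errors from the junction regions being absorbed into the already-present endpoint error. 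Quantifying that the accumulated junction mismatches, spread over $N$ steps, stay below $r$ in $C^1$ (this is precisely the role of the quasi-monotone bookkeeping: the number of "bad" junctions is $O(l)$, independent of $N$, while each bad junction contributes $O(1/N)$) is the technical heart of the argument, and handling it cleanly—probably by an explicit piecewise construction with bump-function interpolations whose derivatives are $O(1/N)$—is what the detailed proof will have to grind through.
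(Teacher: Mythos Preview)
Your overall architecture—orbit/fundamental-domain picture, build an approximate $N$-th root along a refined orbit, handle the endpoints separately using $f'(0)=f'(1)=1$, and control $(g^N)'$ via a quasi-monotonicity bookkeeping—is exactly the paper's strategy. But there is a genuine gap in your Step~(3): you invoke ``$l$-quasi-monotonicity of the derivative sequence $(\log f'(f^k(p)))$'' without ever saying where a finite $l$ comes from. For a merely $C^1$ map $f$ there is no reason for this sequence to have boundedly many monotone pieces; the paper's first move, which you omit, is to replace $f$ by a $C^2$ approximant $f_1\in B_{r/4}(f)$ whose second derivative changes sign at most $l$ times. \emph{That} is what forces the difference-quotient sequence $\overline{g'}(z_i)=(z_{i+2}-z_{i+1})/(z_{i+1}-z_i)$ to be $l$-quasi-monotone (via the Mean Value Theorem), and without it your ``$O(l)$ bad junctions, each contributing $O(1/N)$'' estimate has no content.

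Your proposed fix in Step~(3)—take $g$ to ``translate by $\approx\frac1N(f(p)-p)$ on a central fundamental domain''—also does not work as written: iterating a near-translation $N$ times gives $g^N(x)\approx x+(f(p)-p)$, which is $C^0$-far from $f$ wherever $f(x)-x$ differs substantially from $f(p)-p$. The paper avoids the conjugacy-to-translation route entirely and instead builds $g$ directly on a refined orbit: equally subdivide one fundamental domain $[x_0,x_1]$ into $N$ pieces, push the subdivision forward by $f_1$ to get points $z_0<\dots<z_{nN}$, set $g(z_i)=z_{i+1}$ (so that $g^N=f_1$ exactly on the coarse orbit), prescribe $g'(z_i)$ as the forward difference quotient up to a tiny signed error $\epsilon_i^{(N)}$, and then extend $g'$ \emph{monotonically} on each $[z_i,z_{i+1}]$. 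The $\epsilon_i^{(N)}$ are what make a monotone extension possible, and the $l$-quasi-monotonicity is what keeps the product $\prod_{j=0}^{N-1}g'(g^j(z))$ within $r/4$ of $(g^N)'(z_i)$ for $z\in(z_i,z_{i+1})$. Your plan would become correct once you insert the $C^2$ reduction and replace the ``near-translation'' picture with this refined-orbit construction.
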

 
 \medskip
 
 {\bf Proof.} Let $f_1\in B_{r/4}(f)$ be a diffeomorphism of class $C^2$ with no fixed point in $(0,1)$, such that $f_1''$ changes its sign at most $l$ many times, for some finite $l$.
 Let also $0 < x_0 < x_1 < \dots < x_n < 1$ such that $f_1(x_i) = x_{i+1}, 0\leq i\leq n-1$, moreover, $$\max \{x_0, 1-x_n\} < r/4 \ \mathrm{and} \ \displaystyle \mathop{\sup }_{x\in [0, x_1]\cup [x_{n-1},1]}|f_1'(x) - 1|< r/4.$$  
  
 \medskip
 
 Let $N\geq 1$ be sufficiently big with respect to $\max \{n, l\}$, and $z_0, z_1, z_2,$ \ $\dots , z_{nN}$ be a sequence such that $z_{iN} = x_i, 0\leq i\leq n$, $z_{k} = x_0 + \frac{x_1-x_0}{N}k, 0\leq k\leq N$, and $z_{iN+j} = f_1(z_{iN-N+j}), 1\leq i\leq n-1, 1\leq j\leq N$.
 
 \medskip
 
 Now, we let $$\overline{g'}(z_i) = \frac{z_{i+2}-z_{i+1}}{z_{i+1}-z_i} = \frac{g(z_{i+1})-g(z_{i})}{z_{i+1}-z_i}, 0\leq i\leq nN-2$$ and define the diffeomorphism $g\in \mathrm{Diff}_{+}(I)$ as follows. Firstly, we let $g(z_i) = z_{i+1}, 0\leq i\leq nN-1$. Then we let $$g'(z_i) = \frac{z_{i+2}-z_{i+1}}{z_{i+1}-z_i} + \epsilon _i^{(N)}, 0\leq i\leq nN-2$$ where $\epsilon _i^{(N)} < 0$ if $\overline{g'}(z_i) < \overline{g'}(z_{i+1})$, and $\epsilon _i^{(N)} > 0$ if $\overline{g'}(z_i) > \overline{g'}(z_{i+1})$; also, in case of the equality $\overline{g'}(z_i) = \overline{g'}(z_{i+1})$, we let $\epsilon _i^{(N)} = 0$.  
 
 \medskip
 
 Let us observe that we already have $g^N(z_i) = z_{i+N}, 0\leq i\leq nN-N$. Then $|g^N(z_i) - f_1(z_i)| < r/4$ for all $1\leq i\leq nN-1$ hence we have $||g^N(x) - f_1(x)||_0 < r/2$ {\bf (1)} \ for all $x\in [0,1]$ (i.e. for all possible extensions of $g$). On the other hand, by Mean Value Theorem, applied to the iterates $f_1^s, 1\leq s\leq n$ and their derivatives $(f_1^s)', 1\leq s\leq n$, for sufficiently big $N$ and sufficiently small $\epsilon ^{(N)}: = \displaystyle \mathop{\max }_{1\leq i\leq N}|\epsilon _i^{(N)}|$, we obtain that $$|g'(z_i)-1| < r/4, 0\leq i\leq nN-2.$$
 
 \medskip
 
 By chain rule, we also have $$(g^N)'(z_i) = \frac{z_{i+N+2}-z_{i+N+1}}{z_{i+1}-z_i} + \delta _i^{(N)}, 0\leq i\leq nN-N$$ where $\displaystyle \mathop{\max }_{1\leq i\leq N}|\delta _i^{(N)}|\to 0$ as $N\to \infty $ and $\epsilon ^{(N)}\to 0$. Then by Mean Value Theorem, if $N$ is sufficiently big, we obtain $$|(g^N)'(z_i) - f_1'(z_i)| < r/4, 0\leq i\leq nN-n.$$

 \medskip
 
 Now we need to extend the definition of $g'(x)$ to all $x\in [0,1]$ such that we have $|(g^N)'(x) - f_1'(x)| < r/2$ {\bf (2)} for all $x\in [0,1]$. For all $x\in (z_i, z_{i+1}), 0\leq i\leq nN-1$, we define $g'(x)$ such that $g'(x)$ is monotone on the interval $(z_i, z_{i+1})$.  (A monotone extension is possible because of the $\epsilon_i^N$ error terms.)
 
 \medskip
 
 Then we have $|g'(x)-1| < r/4$ for all $x\in (x_0, x_{n})$. By uniform continuity of the derivative $f_1'(x)$ and by Mean Value Theorem, if $N$ is sufficiently big, we also obtain the inequality {\bf (2)}. Indeed, it suffices to show that $|(g^N)'(z) - (g^N)'(z_i)| < r/4$ for all $z\in (z_i, z_{i+1}), 0\leq i\leq nN-n.$
 
 \medskip
 
 By Mean Value Theorem, for sufficiently big $N$ the sequence $$(g'(z_i), g'(z_{i+1}), \dots , g'(z_{i+N}))$$ is $l$-quasi-monotone. For simplicity, let us assume that this sequence is quasi-monotone. 
 
 \medskip
 
 Without loss of generality, let us assume that $g'(z_i) < g'(z_{i+1})$. Then for some $p\in \{1, \dots , N\}$ we have $g'(z_i) \leq g'(z_{i+1}) \leq \dots \leq g'(z_{i+p})$ and $g'(z_{i+p}) \geq \dots \geq g'(z_{i+N})$. 
 
 \medskip
 
 Then $$(g^N)'(z) \geq g'(z_i)\dots g'(z_{i+p-1})g'(z_{i+p+1})\dots g'(z_{i+N}) = g'(z_i)\dots g'(z_{i+N-1})\frac{g'(z_{i+N})}{g'(z_{i+p})}.$$ For sufficiently big $N$, we obtain that $(g^N)'(z) \geq (g^N)'(z_i) - r/4$. Similarly, we also have the inequality $(g^N)'(z) \leq (g^N)'(z_i) + r/4$. Thus we obtain the inequality {\bf (2)}.
 
 \medskip
 
 If the sequence $(g'(z_i), g'(z_{i+1}), \dots , g'(z_{i+N}))$ is $l$-quasi-monotone (i.e. in the most general case) then, similarly, we will obtain $$(g^N)'(z) \geq g'(z_i)\dots g'(z_{i+N-1})\displaystyle \mathop{\Pi }_{1\leq j\leq s}\frac{g'(z_{p_j})}{g'(z_{q_j})}$$ where $s\leq l$ and $p_1, q_1, \dots , p_s, q_s\in \{i, i+1, \dots , i+N\}$. Then, by Mean Value Theorem, for sufficiently big $N$, we obtain the inequality $(g^N)'(z) \geq (g^N)'(z_i) - r/4$, and similarly, the inequality $(g^N)'(z) \leq (g^N)'(z_i) + r/4$.  
 
 \medskip

  The inequalities {\bf (1)} and {\bf (2)} imply the claim. $\square $
 
 \bigskip

 At the end of this section we would like to quote the following result of T.Tsuboi which is one of the many perfectness results of various homeomorphism and diffeomorphism groups. We have used this result in Section 2.

 \begin{lem}\label{lem:tsuboi} The group $G=\{f\in\mathrm{Diff}_+^1(I):f'(0)=f'(1)=1\}$ is perfect.
 \end{lem}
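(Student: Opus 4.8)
The plan is to establish perfectness by the classical ``rolling up'' (infinite repetition) construction, the real work being to keep everything inside the $C^1$ category. I would proceed in two stages: first a reduction to a local statement near an endpoint of $I$, and then the rolling-up argument together with the estimates that make it succeed.

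For the reduction, recall from the proof of Theorem~\ref{thm:diff(I)} that $G=S^2$, where $S=\{f\in G:\Fix(f)\cap(0,1)=\emptyset\}$; so it is enough to express each element of $S$ as a product of commutators. Given $f\in S$, say with $f(x)>x$ on $(0,1)$, the path $f_t=(1-t)\,\mathrm{id}+t f$ depends $C^1$-continuously on $t$, and its consecutive ratios $f_{(i-1)/N}^{-1}\circ f_{i/N}$ again lie in $S$ (they are fixed-point-free in $(0,1)$ since $f$ is) while becoming arbitrarily $C^1$-close to the identity as $N\to\infty$; thus $f=\prod_{i=1}^{N}\bigl(f_{(i-1)/N}^{-1}\circ f_{i/N}\bigr)$ is a product of $C^1$-small elements of $S$. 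Each such small element may then be fragmented --- by cutting it off at an interior point and extending by the identity --- into a product of two $C^1$-small elements of $G$, one supported in $[0,3/4]$ and one in $[1/4,1]$. Hence it suffices to prove the local claim: \emph{if $g\in G$ is $C^1$-close to the identity and $\supp g\subseteq[0,3/4]$, then $g$ is a product of commutators in $G$} (the case $\supp g\subseteq[1/4,1]$ being symmetric).

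To prove the local claim I would realize $g$ as a single commutator $g=[Q,h]$. Pick any $h\in G$ with $h(x)>x$ on $(0,1)$ and set
\[
Q \;=\; \prod_{n\ge 1} h^{-n} g^{-1} h^{n},
\]
the $n$-th factor being supported in $h^{-n}([0,3/4])=[0,h^{-n}(3/4)]$. This infinite composition defines a homeomorphism of $I$ which is the identity on $[3/4,1]$ and is a genuine $C^1$ diffeomorphism at every point of $(0,1]$ (near such a point only finitely many factors are nontrivial), and the telescoping identity $hQh^{-1}=g^{-1}\circ Q$ gives $hQh^{-1}Q^{-1}=g^{-1}$, hence $g=[Q,h]$. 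Everything therefore rests on the single remaining point: that $Q$ extends to a $C^1$ diffeomorphism of all of $I$ with $Q'(0)=1$, i.e. that $Q'(x)\to1$ as $x\to0^{+}$. Differentiating the (finitely many, but increasingly numerous as $x\to0$) nontrivial factors, one sees that this amounts to controlling, uniformly, sums of the form $\sum_n\bigl(\log h'(h^{-n}y)-\log h'(h^{-n}g^{-1}(y))\bigr)$ and $\sum_n\bigl((g^{-1})'(h^{n}x)-1\bigr)$.

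This last step is the crux, and it is exactly where $C^1$ behaves differently from $C^{\infty}$: had we only wanted $Q\in\Diff^{\infty}_+(I)$, we could have let the supports $h^{-n}([0,3/4])$ shrink so fast that every derivative of the infinite composition converges automatically, whereas in the $C^1$ setting the sums above genuinely must be made summable, and a naive choice of $h$ fails because $g'$ may tend to $1$ too slowly near $0$ while $\log h'$ has no derivative ``to spare''. Overcoming this is the substance of Tsuboi's theorem. The idea is to choose the auxiliary map $h$ --- in particular the sign and rate of decay of $\log h'$ near $0$ --- and a suitable decomposition of $g$ in tandem, so that, roughly, a cohomological equation of the type $\Gamma\circ g-\Gamma=\log g'$ can be solved with $\Gamma$ continuous up to the endpoints; Lemma~\ref{lem:roots} is the natural instrument here, since replacing a piece of $g$ by one admitting an iterative $N$-th root --- at the cost of an arbitrarily small $C^1$ error, to be absorbed into a further commutator --- supplies exactly the flatness that makes the equation solvable. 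Carrying out these estimates, including the parallel ones for $(Q^{N})'$ needed to handle iterates, is the technical heart of the matter, and is what Tsuboi's work provides; I would not anticipate a substantially shorter route.
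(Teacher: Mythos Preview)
The paper does not prove this lemma at all: its entire proof is the sentence ``See Theorem 4.1 in \cite{T}.''  So there is nothing to compare your argument against except Tsuboi's original.  Your outline is a faithful sketch of the classical Mather--Tsuboi ``infinite repetition'' mechanism, and you correctly isolate the only real difficulty, namely the $C^1$ regularity of the infinite product $Q$ at the accumulating endpoint.  Since you then explicitly hand that step back to ``what Tsuboi's work provides,'' your proposal and the paper's one-line citation ultimately land in the same place.

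One point does deserve a flag: your suggestion that Lemma~\ref{lem:roots} is ``the natural instrument'' for the endpoint estimate is not how Tsuboi's proof goes, and as stated it does not close the gap.  After fragmentation your map $g$ is supported in $[0,3/4]$ and hence has a whole interval of interior fixed points, so Lemma~\ref{lem:roots} (whose hypothesis is $\Fix(f)\cap(0,1)=\emptyset$) does not apply to it; and applying the lemma earlier, before fragmentation, leaves you with an approximation error that you propose to ``absorb into a further commutator'' --- but expressing an arbitrary small element of $G$ as a product of commutators is precisely the statement under proof, so this is circular unless you supply an independent reason the error already lies in $[G,G]$.  Tsuboi's actual argument proceeds instead by choosing $h$ adapted to the modulus of continuity of $\log g'$ near $0$ and estimating the resulting sums directly; Lemma~\ref{lem:roots} plays no role there, and in this paper it is used only in the construction of Theorem~\ref{thm:diff(I)}, not in the perfectness result.
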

 
 \medskip
 
 {\bf Proof.} See Theorem 4.1 in \cite{T}.  
 
 \vspace{1cm}
 
 \section{Generic Subgroups}
 
 In this section we will prove that a generic 2-generated subgroup of $\mathrm{Homeo}_+(I)$ is dense.  Moreover, we observe that $I$ is the unique compact connected manifold whose homeomorphism group exhibits this property, for any size of finite generating set.  That is, we prove that for any compact connected manifold $M$ other than $I$ and for any $n\geq1$, a generic $n$-generated subgroup is not dense.  In fact, we will exhibit an open set such that a subgroup generated by an $n$-tuple from this open set (in the product $\mathrm{Homeo}_0(M)^n)$ is even discrete.  If $M$ is a compact manifold with boundary, then there is even an open dense set of generators in the product group $\mathrm{Homeo}_0(M)^n$ which generate a discrete subgroup.  So the behavior of generic subgroups in a compact manifold $M$ with boundary of dimension greater than or equal to $2$ is opposite from the case of $I$.

 \begin{thm}  For a comeager set of pairs $(f,g)\in(\mathrm{Homeo}_+(I))^2$, the subgroup $\Gamma=\langle f,g\rangle$ is dense in $\mathrm{Homeo}_+(I)$.
 \end{thm}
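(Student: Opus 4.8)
The plan is to exhibit a countable family of open conditions on pairs $(f,g)\in\Homeo_+(I)^2$, each of which is dense, whose intersection forces $\langle f,g\rangle$ to be dense; by the Baire category theorem this suffices. Fix a countable basis $\{W_k\}_{k\geq 1}$ for $\Homeo_+(I)$ with respect to the uniform (sup) metric $d$, together with a countable dense set $\{\theta_j\}_{j\geq 1}$ in $\Homeo_+(I)$ and a decreasing sequence $\varepsilon_m\to 0$. For each triple $(j,m)$ I want the comeager set to guarantee that some word in $f,g$ lies within $\varepsilon_m$ of $\theta_j$; thus I define
\[
\Omega_{j,m}=\{(f,g): \exists\, w \text{ a word such that } d(w(f,g),\theta_j)<\varepsilon_m\},
\]
which is \emph{open} in $\Homeo_+(I)^2$ (the condition $d(w(f,g),\theta_j)<\varepsilon_m$ is open and being a union over all words keeps it open). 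The theorem follows once we show each $\Omega_{j,m}$ is dense, since $\bigcap_{j,m}\Omega_{j,m}\subseteq\{(f,g):\langle f,g\rangle \text{ dense}\}$ and this intersection is comeager.

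The heart of the matter is therefore the density of $\Omega_{j,m}$: given an arbitrary pair $(f_0,g_0)$ and $\rho>0$, I must find $(f,g)$ with $d(f,f_0)<\rho$, $d(g,g_0)<\rho$, and a word $w$ with $w(f,g)$ within $\varepsilon_m$ of the prescribed target $\theta_j$. The mechanism I would use is a finite, localized version of the encoding strategy from the proof of Theorem~\ref{thm:diff(I)}, but now carried out in $\Homeo_+(I)$ where there is no derivative to control, which makes it far easier. Concretely: perturb $f_0$ on a tiny subinterval $[a,b]\subset(0,1)$ on which $f_0$ has no fixed points (such an interval exists since $f_0\neq\mathrm{id}$ can be arranged by a first $\rho/2$-perturbation, or one works near a point where $f_0$ moves points) to install there a small ``scaled copy'' of $\theta_j$; and perturb $g_0$ similarly so that a suitable power or conjugate $g^n$ maps $[a,b]$ onto a subinterval $[\alpha,1-\alpha]$ exhausting $(0,1)$ as $n\to\infty$. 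Then the word $w=g^{-n}(\text{copy})g^n$ recovers, up to $\varepsilon_m$, a homeomorphism supported on $[\alpha,1-\alpha]$ and equal to a rescaling of $\theta_j$ there; choosing $\alpha$ small and the copy accurately gives $d(w(f,g),\theta_j)<\varepsilon_m$. The flexibility of $C^0$ homeomorphisms — any two homeomorphisms of an interval that agree on the endpoints can be interpolated, and conjugation by a homeomorphism realizing the affine scaling is exact — means the rescaled copy can be made to approximate $\theta_j$ arbitrarily well, with no analogue of Lemma~\ref{lem:roots} needed.

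The main obstacle is bookkeeping the \emph{simultaneity}: we need $f$ close to $f_0$, $g$ close to $g_0$, and the copy of $\theta_j$ installed in $f$ to interact correctly with the scaling dynamics of $g$, all within the single subinterval $[a,b]$, without the perturbations of $f$ and $g$ interfering on the rest of $I$. I would handle this by choosing $[a,b]$ small enough that altering $f_0$ and $g_0$ only on $[a,b]$ costs less than $\rho$ in the sup metric, using the fact that on a short interval any orientation-preserving homeomorphism fixing the endpoints is automatically $C^0$-small; and by selecting the scaling conjugator for $g$ to be a $\rho$-small perturbation of $g_0$ supported near $[a,b]$, arranging a fixed point of $g$ just outside $b$ so that forward iterates of $[a,b]$ accumulate on an interval of the form $[\alpha,1-\alpha']$ with $\alpha,\alpha'$ small. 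One further technical point: to realize $\theta_j$ (which need not fix any interior point) as the limit of rescaled copies supported on a subinterval, I would first approximate $\theta_j$ by $\theta_j'$ agreeing with the identity near $0$ and $1$ — possible since such maps are dense in $\Homeo_+(I)$ — and encode $\theta_j'$. Once density of each $\Omega_{j,m}$ is established this way, the Baire category theorem closes the argument.
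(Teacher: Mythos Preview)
Your $G_\delta$ reduction is exactly the paper's, and your plan to prove density of each individual $\Omega_{j,m}$ (rather than density of the full set $A$ of dense-generating pairs) is a legitimate, a priori weaker target. The gap lies in your density argument, specifically in the ``blow-up'' step.

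You propose to perturb $g_0$ to a nearby $g$ so that a power $g^n$ carries the small interval $[a,b]$ onto an interval $[\alpha,1-\alpha]$ exhausting $(0,1)$. This cannot be arranged by a $\rho$-small perturbation of an arbitrary $g_0$: if $g_0$ has fixed points scattered through $(0,1)$, then so does every $C^0$-near $g$, and iterates of $[a,b]$ under $g$ cannot cross those fixed points. Your own later remark (``arranging a fixed point of $g$ just outside $b$'') is in fact incompatible with the stretching you want, since such a fixed point traps the forward orbit of $[a,b]$. So the mechanism as described does not work, and the sketch does not supply an alternative.

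The paper resolves this in two moves you are missing. First, it reduces (by an initial perturbation) to the case where $f$ and $g$ have \emph{no common} fixed point in $(0,1)$; then the blow-up map is not a power of a single generator but a word $h\in\langle f,g\rangle$, which exists precisely because the group has no global interior fixed point and hence can push any point as close to $1$ as desired. Second, because this $h$ is an uncontrolled word, conjugating an encoded ``scaled copy of $\theta_j$'' by $h$ would distort it unpredictably --- your claim that ``conjugation by a homeomorphism realizing the affine scaling is exact'' presupposes an affine conjugator you do not have. The paper therefore does \emph{not} encode the target $\theta_j$; it encodes, on each subinterval $[x_{n+1},x_n]$ near $0$, a pair of homeomorphisms that densely generate $\Homeo_+([x_{n+1},x_n])$. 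After conjugation by the unknown $h$, one still has dense generators of the stretched interval, and \emph{then} approximates the target. Both ideas --- no common fixed point to obtain the blow-up from the group, and encoding generators rather than targets to survive an unknown conjugator --- are essential and absent from your sketch.
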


{\bf Proof.}  Let $A$ be the set of all pairs $(f,g)$ which generate a dense subgroup of $\mathrm{Homeo}_+(I)$.  We will show $A$ is a dense $G_\delta$ set in $\mathrm{Homeo}_+(I)$.  Let $\rho$ denote the standard uniform metric on $\mathrm{Homeo}_+(I)$.  If $D\subseteq\mathrm{Homeo}_+(I)$ denotes a countable dense subset of $\mathrm{Homeo}_+(I)$, and $W$ denotes the set of all words in a free group on two generators, then we have\\

\medskip

\begin{center} $(f,g)\in A~\leftrightarrow~\forall d\in D~\forall n\in\mathbb{Z}^+~\exists w\in W~ \rho(w(f,g),d)<\frac{1}{n}$.
\end{center}
\vspace{.3cm}

Since the predicate above is an open condition (by continuity of the word maps $w$), this equivalence shows $A$ is $G_\delta$.  So it remains for us to show that $A$ is dense.  Let $(f,g)$ be an arbitrary pair of maps in $\mathrm{Homeo}_+(I)$ and let $\delta>0$ be arbitrary; we will show there is a pair of maps $(\tilde{f},\tilde{g})$ such that $\rho(f,\tilde{f})<\delta$, $\rho(g,\tilde{g})<\delta$, and $\Gamma=\langle\tilde{f},\tilde{g}\rangle$ is dense.

\medskip

Firstly, it is clear that the set of all pairs $(f,g)$ which do not share a common fixed point in $(0,1)$ is dense in $(\mathrm{Homeo}_+(I))^2$.  This means we can assume without loss of generality that $f$ and $g$ do not fix a common point in $(0,1)$, and we do so now.

\medskip

We construct the maps $\tilde{f}$, $\tilde{g}$ as follows.  Since $f$ and $g$ fix the endpoints $0$ and $1$ of $I$, we may find a distance $\alpha>0$ so small that $f(x),g(x)<\delta$ whenever $x<\alpha$.  Let $y_0\in(0,\alpha)$ and $\tilde{g}$ be any interval homeomorphism such that the following conditions are satisfied:\\

\medskip

\begin{enumerate}
		\item $y_0$ is the least non-zero fixed point of $\tilde{g}$;
		\item $\tilde{g}(x)>x$ for all $x\in(0,y_0)$; and
		\item $\tilde{g}$ agrees with $g$ on $[\alpha,1]$.
\end{enumerate}
\vspace{.3cm}

Note that since we have only modified $g$ on the interval $[0,\alpha)$ to produce $\tilde{g}$, we have $\rho(g,\tilde{g})<\delta$ as desired.

\medskip

Now let $x_0\in(0,y_0)$ be arbitrary.  For each $n\geq1$ set $x_n=\tilde{g}^{-n}(x_0)$, so the sequence $(x_n)$ decreases strictly and converges to $0$.  Let $\phi$ and $\psi$ be any two elements of $\mathrm{Homeo}_+([x_1,x_0])$ which generate a dense subgroup of $\mathrm{Homeo}_+([x_1,x_0])$.  Construct $\tilde{f}$ to satisfy the following conditions:\\

\begin{enumerate}
		\item $\tilde{f}$ fixes each point in the sequence $(x_n)$;
		\item $\tilde{f}(x)>x$ for all $x\in(x_0,y_0]$;
		\item $\tilde{f}$ agrees with $\phi$ on $[x_1,x_0]$ and with $\tilde{g}^{-1}\psi\tilde{g}$ on $[x_2,x_1]$
		\item $\tilde{f}$ agrees with $\tilde{g}^{-2}\tilde{f}\tilde{g}^2$ on $[x_{n+1},x_n]$ for each $n\geq 2$;
		\item $\tilde{f}$ agrees with $f$ on $[\alpha,1]$; and
		\item $\tilde{f}$ has no common fixed point with $\tilde{g}$ on $(y_0,\alpha)$.
\end{enumerate}
\vspace{.3cm}

Once again we have $\rho(f,\tilde{f})<\delta$ since we have only modified $f$ on $[0,\alpha)$.  Note that $\tilde{f}$ is constructed in such a way that the maps $\tilde{f}|_{[x_{n+1},x_n]}$ and $\tilde{g}\tilde{f}\tilde{g}^{-1}|_{[x_{n+1},x_n]}$ generate a dense subgroup of $\mathrm{Homeo}_+([x_{n+1},x_n]$ for each $n\geq 0$.

\medskip

We claim that $\tilde{f}$ and $\tilde{g}$ generate a dense subgroup $\Gamma=\langle\tilde{f},\tilde{g}\rangle$ in $\mathrm{Homeo}_+(I)$.  To see this, let $\varphi\in\mathrm{Homeo}_+(I)$ and $\epsilon>0$ be arbitrary; we will produce a word in $\tilde{f}$ and $\tilde{g}$ which is uniformly $\epsilon$-close to $\varphi$.

\medskip

Let $\gamma>0$ be so small that $\varphi(x)<\epsilon$ whenever $x<\gamma$ and $\varphi(x)>1-\epsilon$ whenever $x>1-\gamma$.  Note that the perturbed maps $\tilde{f}$ and $\tilde{g}$ still do not share a common fixed point in $(0,1)$.  This means we can find a map $h\in\Gamma$ for which $h(y_0)>1-\gamma$.  Set $F=h\tilde{f}h^{-1}$ and $G=h\tilde{g}h^{-1}$.  So $F,G\in\Gamma$, and $\mathrm{Fix}(F)=h(\mathrm{Fix}(\tilde{f}))$ and $\mathrm{Fix}(G)=h(\mathrm{Fix}(\tilde{g}))$.

\medskip

Since $(h(x_n))\rightarrow0$, let $N$ be an integer so large that $h(x_{N+1})<\gamma$.  Since $F$ has no fixed points on the interval $(h(x_0),h(y_0)]$, and the point $G(h(x_0))$ lies on this interval, we may find an integer $M$ so large that $F^M(G(h(x_0)))>h(y_0)$.  Set $\Phi=G^{-(N+1)}F^MG^{N+1}\in\Gamma$.  Then $\Phi$ fixes $h(x_{N+1})$, and maps $h(x_N)$ to a point strictly greater than $h(y_0)$.  Defining $a$ and $b$ to be these image points respectively, we have

\medskip

 $$a=\Phi(h(x_{N+1}))=h(x_{N+1})<\gamma<1-\gamma<h(y_0)<\Phi(h(x_{N+1}))=b.$$

\bigskip

Now since the maps $\tilde{f}|_{[x_{N+1},x_N]}$ and $\tilde{g}\tilde{f}\tilde{g}^{-1}|_{[x_{N+1},x_N]}$ generate a dense subgroup of $\mathrm{Homeo}_+([x_{N+1},x_N])$, it follows that the maps $F|_{[h(x_{N+1}),h(x_N)]}$ and $GFG^{-1}|_{[h(x_{N+1}),h(x_N)]}$ generate a dense subgroup of $\mathrm{Homeo}_+([h(x_{N+1}),h(x_N)]$.  In turn, we have that $\Phi F \Phi^{-1}|_{[a,b]}$ and $\Phi GFG^{-1}\Phi^{-1}|_{[a,b]}$ generate a dense subgroup of $\mathrm{Homeo}_+([a,b])$.

\medskip

Now note that $|\varphi(a)-a|<\max(\varphi(a),a)<\epsilon$ since $a<\gamma$, and $|\varphi(b)-b|<\max(1-\varphi(b),1-b)<\epsilon$ since $b>1-\gamma$.  This $\epsilon$-closeness at the endpoints $a$ and $b$ ensures that we may find a word $w=w(\Phi F\Phi^{-1},\Phi GFG^{-1}\Phi^{-1})\in\Gamma$ which is uniformly $\epsilon$-close to $\Phi$ on the interval $[a,b]$.  Since this word $w$ must fix $a$ and $b$ and $a<\gamma<1-\gamma<b$, it follows again from our choice of $\gamma$ that $w$ is uniformly $\epsilon$-close to $\varphi$ on $[0,a]$ and $[b,1]$ as well.  So $\rho(w,\varphi)<\epsilon$ and $\Gamma$ is dense, as claimed. $\square $

\medskip

 \begin{thm} \label{thm:ping-pong} Let $k\geq 0$ and $M$ be a compact connected manifold which is not homeomorphic to a closed interval. Then for every $n\geq 2$, there exists $f_1, f_2, \dots, f_n \in \mathrm{Homeo}_0(M)$ and $r > 0$ such that for all $\tilde {f}_1, \tilde{f}_2, \dots, \tilde{f}_n$ from the $r$-neighborhood of $f_1, f_2,\dots,f_n$ in $C^0$ metric, the maps $\tilde {f}_1, \tilde {f}_2,\dots\tilde{f}_n$ generate a discrete free subgroup of rank $n$ in $C^0$ metric. Moreover, if $M$ is a compact connected smooth manifold, then the same results hold for the groups $\mathrm{Diff}_0^k(M)$ in the $C^k$ metric, $1\leq k\leq \infty$, and we can choose $f_1,f_2,\dots,f_n$ from $\mathrm{Diff}_0^{\infty }(M)$.
 \end{thm}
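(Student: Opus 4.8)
The plan is to exhibit, for each $n\ge 2$, a robust Schottky (ping--pong) configuration: pairwise disjoint closed balls $A_1^-,A_1^+,\dots,A_n^-,A_n^+$ in $M$ — pairwise disjoint closed \emph{arcs}, if $M\cong\mathbb{S}^1$ — together with maps $f_1,\dots,f_n$ and a radius $r>0$ with the following property. Writing $X_i=\mathrm{int}\,A_i^-\cup\mathrm{int}\,A_i^+$, $Y_i=A_i^+\cup\bigcup_{j\ne i}(A_j^-\cup A_j^+)$ and $Z_i=A_i^-\cup\bigcup_{j\ne i}(A_j^-\cup A_j^+)$, every $\tilde f_i$ within $C^0$-distance $r$ of $f_i$ is required to satisfy
\[
\tilde f_i(Y_i)\subseteq\mathrm{int}\,A_i^+ \qquad\text{and}\qquad \tilde f_i^{-1}(Z_i)\subseteq\mathrm{int}\,A_i^-.
\]
Since $\overline{X_j}=A_j^-\cup A_j^+\subseteq Y_i\cap Z_i$ for $j\ne i$, while $A_i^+\subseteq Y_i$ and $A_i^-\subseteq Z_i$, an immediate induction on $|m|$ yields $\tilde f_i^{\,m}(\overline{X_j})\subseteq\mathrm{int}\,A_i^+$ for $m\ge1$ and $\subseteq\mathrm{int}\,A_i^-$ for $m\le-1$; in particular $\tilde f_i^{\,m}(\overline{X_j})\subseteq X_i$ for all $j\ne i$ and all $m\in\Z\setminus\{0\}$.

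I would then derive both freeness and discreteness of $\Gamma=\langle\tilde f_1,\dots,\tilde f_n\rangle$ from a single ping--pong argument. Put $d_0=\min\{\mathrm{dist}(B,B'):B\ne B'\text{ among the }A_i^\pm\}>0$. Given a nontrivial reduced word $w=\tilde f_{i_1}^{m_1}\cdots\tilde f_{i_\ell}^{m_\ell}$, choose a point $p$: if $\ell=1$, take $p\in\mathrm{int}\,A_k^+$ for a fixed $k\ne i_1$; if $\ell\ge2$ and $i_1\ne i_\ell$, take $p\in\mathrm{int}\,A_{i_1}^-$ when $m_1>0$ and $p\in\mathrm{int}\,A_{i_1}^+$ when $m_1<0$; if $\ell\ge2$ and $i_1=i_\ell$, take $p\in\mathrm{int}\,A_{i_2}^-$. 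In every case $p$ lies in some $\overline{X_j}$ with $j\ne i_\ell$, so applying the letters of $w$ from right to left drives the orbit successively into $\overline{X_{i_\ell}},\overline{X_{i_{\ell-1}}},\dots,\overline{X_{i_2}}$ by the inclusions above (consecutive indices differ), and the leftmost letter $\tilde f_{i_1}^{m_1}$ carries $\overline{X_{i_2}}\subseteq Y_{i_1}\cap Z_{i_1}$ (or, when $\ell=1$, the ball containing $p$) into $\mathrm{int}\,A_{i_1}^+$ if $m_1>0$ and $\mathrm{int}\,A_{i_1}^-$ if $m_1<0$. By the choice of $p$ this final ball is one of the $A_i^\pm$ distinct from the one containing $p$, so $w$ displaces $p$ by at least $d_0$. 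Hence $w\ne\mathrm{id}$, so $\Gamma$ is free on the $\tilde f_i$, and $\rho(w,\mathrm{id})\ge d_0$ for every nontrivial $w$, so $\Gamma\cap B_\rho(\mathrm{id},d_0)=\{\mathrm{id}\}$, i.e. $\Gamma$ is discrete.

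It remains to construct the configuration, and this is where the hypothesis $M\not\cong I$ is used; there are two cases. \emph{If $M\cong\mathbb{S}^1$}: pick the disjoint arcs and, for each $i$, a circle diffeomorphism $\phi_i$ with exactly one attracting fixed point in $\mathrm{int}\,A_i^+$ and one repelling fixed point in $\mathrm{int}\,A_i^-$; replacing $\phi_i$ by a high enough power we may take $f_i:=\phi_i$ with $f_i(\overline{\mathbb{S}^1\setminus\mathrm{int}\,A_i^-})\subseteq\mathrm{int}\,A_i^+$ and $f_i^{-1}(\overline{\mathbb{S}^1\setminus\mathrm{int}\,A_i^+})\subseteq\mathrm{int}\,A_i^-$, and the required inclusions follow since $Y_i\subseteq\mathbb{S}^1\setminus\mathrm{int}\,A_i^-$ and $Z_i\subseteq\mathbb{S}^1\setminus\mathrm{int}\,A_i^+$. \emph{If $\dim M\ge2$}: fix a chart $U\cong\R^d$ around an interior point, with $d\ge2$, and put the $2n$ disjoint balls inside a ball $\Omega\Subset U$. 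Because the complement in $\R^d$ of a finite union of disjoint closed balls is connected for $d\ge2$, any configuration of such balls can be carried to any other by an ambient isotopy supported in $\Omega$; I would realize one whose time--one map $f_i$ carries $A_i^-$ onto an open ball containing $Z_i$ while simultaneously deflating and herding $A_i^+\cup\bigcup_{j\ne i}(A_j^-\cup A_j^+)$ into $\mathrm{int}\,A_i^+$ (the target configuration is a genuine configuration of $2n$ disjoint balls, so it is ambient isotopic to the source since $d\ge 2$). Such an $f_i$ lies in $\Diff_0^\infty(M)$ by construction. This fails for $M=I$: a homeomorphism of $I$ fixes the endpoints, hence cannot contract $Y_i\ni A_i^+$ into $\mathrm{int}\,A_i^+$ while spreading $\mathrm{int}\,A_i^-$ over $Z_i$ — for a $1$-manifold the linear order of the balls obstructs this, whereas dimension $\ge2$ provides the room to route around. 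In both cases each displayed inclusion is of the form ``compact set inside open set,'' so it persists for every $\tilde f_i$ within some $C^0$-distance $r_i>0$ of $f_i$ (using continuity of $g\mapsto g^{-1}$ on $\Homeo(M)$ for the $Z_i$-inclusion); take $r=\min_i r_i$.

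Finally, the smooth statement needs nothing new: the $f_i$ above already lie in $\Diff_0^\infty(M)$; a perturbation small in the $C^k$ metric is small in $C^0$, so the ping--pong (which uses only the $C^0$-sized inclusions) still applies; and the $C^k$ metric dominates $\rho$, so every nontrivial $w$ is displaced by $\ge d_0$ in the $C^k$ metric as well, giving discreteness there. \textbf{The main obstacle} is the construction of the $f_i$ when $\dim M\ge2$: producing a \emph{single} diffeomorphism, supported in a ball (hence with a large fixed-point set), that both expands $A_i^-$ across $Z_i$ and contracts $Y_i$ into $\mathrm{int}\,A_i^+$, and doing so with enough slack that both behaviors survive $C^0$- and $C^k$-perturbation. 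This is exactly the point at which the extra room in dimension $\ge2$ is indispensable and where the closed interval is genuinely excluded.
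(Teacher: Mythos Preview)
Your proof is correct and follows the same standard ping--pong strategy as the paper: place $2n$ disjoint balls (or arcs, on $\mathbb{S}^1$), construct the $f_i$ to satisfy the Schottky inclusions, and read off freeness and discreteness from a uniform displacement bound. Your write-up is in fact more explicit than the paper's in two respects---you treat $\mathbb{S}^1$ separately via North--South dynamics (the paper's uniform ``supported in a chart'' construction is delicate in dimension one), and you spell out the $C^0$-robustness as compact-into-open persistence---but the underlying argument is the same.
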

 
 \medskip
 
 {\bf Proof.} We will use a standard ping-pong argument. Let $U$ be an open chart of the $M$ homeomorphic to an open unit ball in $\mathbb{R}^d$ where $d = \mathrm{dim} M$. Let also $A_1, A_2, \dots, A_n, B_1, B_2, \dots, B_n$ be open balls in $U$ with mutually disjoint closures such that $\displaystyle\bigcup_{i=1}^n\overline {A_i\cup B_i} \subset U$. 
 
 \medskip
 
 We can find homeomorphisms (or $C^{\infty }$-diffeomorphisms if $M$ is a smooth manifold) $f_1,\dots,f_n$ of $M$ fixing $M\backslash U$ such that for all $1\leq i\leq n$,
 
 \medskip
 
 (i) $f_i\left((\displaystyle\bigcup_{\{j:1\leq j\leq n, j\neq i\}} A_j\cup B_j) \cup \overline {A_i}\right) \subset A_i$, and
 
 \medskip
 
 (ii) $f_i^{-1}\left((\displaystyle\bigcup_{\{j:1\leq j\leq n, j\neq i\}} A_j\cup B_j) \cup \overline {B_i}\right) \subset B_i$.
 
 \medskip
 
 
 

  
  Then, for all non-zero integers $m$, for each $i,j\in\{1,\dots, n\}$, $i\neq j$, we have $f_i^m (A_j\cup B_j) \subset A_i\cup B_i$. Let $\epsilon $ be the shortest distance among the balls $A_1, \dots, A_n, B_1, \dots, B_n$, and $\delta > 0$ be such that $\phi B_{\delta }(1)\phi ^{-1}\subset B_{\epsilon }(1)$ for all $\phi \in \{f_1^{\pm 1}, \dots, f_n^{\pm 1}\}$ (here, $B_c(1)$ denotes the ball of radius $c$ around the identity).
  
  \medskip
  
  Then we can claim that for every non-trivial reduced word $W$, we have $||W(f_1,\dots , f_n)||_0 > \delta $. Hence $||W(f_1,\dots , f_n)||_k > \delta $. \ $\square $
 
\medskip

We remark here that although the previous theorem implies that $\mathrm{Homeo}_+(\mathbb{S}^1)$ is not generically topologically $2$-generated, it does have dense $2$-generated subgroups, and we provide a brief proof below.

 \bigskip
  
 \begin{prop} $\Homeo_+(\mathbb{S}^1)$ is topologically $2$-generated.
 \end{prop}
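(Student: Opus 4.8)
The plan is to avoid constructing generators by hand and instead exhibit a concrete dense finitely generated subgroup, namely Thompson's group $T$. Recall that $T$ is the group of orientation-preserving piecewise-linear homeomorphisms of $\mathbb{S}^1=\mathbb{R}/\mathbb{Z}$ all of whose breakpoints are dyadic rationals and all of whose slopes are integer powers of $2$; it is a classical fact that $T$ is $2$-generated. Thus the proposition reduces to the claim that $T$ is dense in $\Homeo_+(\mathbb{S}^1)$ in the uniform metric.

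To prove density I would first record the following elementary one-dimensional fragmentation observation: for any two distinct points $p,q\in\mathbb{S}^1$, writing $\mathrm{Stab}(x)$ for the stabilizer of $x$ in $\Homeo_+(\mathbb{S}^1)$, one has $\langle\mathrm{Stab}(p),\mathrm{Stab}(q)\rangle=\Homeo_+(\mathbb{S}^1)$. Indeed, choose disjoint closed arcs $\overline{A}\ni p$ and $\overline{B}\ni q$; then $U=\mathbb{S}^1\setminus\overline{A}$ and $V=\mathbb{S}^1\setminus\overline{B}$ are open arcs covering $\mathbb{S}^1$, every homeomorphism supported in $U$ (resp.\ $V$) fixes $p$ (resp.\ $q$), and homeomorphisms supported in a covering pair of arcs generate $\Homeo_+(\mathbb{S}^1)$. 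Now inside $T$ sits the usual copy of Thompson's group $F$, realized as the dyadic PL homeomorphisms fixing the point $0$; under the natural topological isomorphism $\mathrm{Stab}(0)\cong\Homeo_+(I)$ given by cutting the circle at $0$, this $F$ corresponds to the classical $F\le\Homeo_+(I)$, which is dense in $\Homeo_+(I)$ (the density quoted in the introduction). Since $\mathrm{Stab}(0)$ is closed in $\Homeo_+(\mathbb{S}^1)$, it follows that $\overline{F}=\mathrm{Stab}(0)$. Moreover $T$ contains the dyadic rotation $\rho\colon x\mapsto x+\tfrac12$, and $\rho(0)=\tfrac12\neq0$. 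Hence
\[
\overline{T}\;\supseteq\;\overline{\langle F,\rho\rangle}\;\supseteq\;\langle\overline{F},\rho\rangle\;=\;\langle\mathrm{Stab}(0),\rho\rangle\;\supseteq\;\langle\mathrm{Stab}(0),\rho\,\mathrm{Stab}(0)\,\rho^{-1}\rangle\;=\;\langle\mathrm{Stab}(0),\mathrm{Stab}(\tfrac12)\rangle\;=\;\Homeo_+(\mathbb{S}^1),
\]
the last equality by the fragmentation observation. So $T$ is dense, and since it is $2$-generated the proposition follows.

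The only inputs here are the density of Thompson's $F$ in $\Homeo_+(I)$ and the one-dimensional fragmentation lemma, both standard. A self-contained alternative is to take any pair $f_1,f_2$ topologically generating $\mathrm{Stab}(0)\cong\Homeo_+(I)$ — which exists by Theorem \ref{thm:generic} — together with $\rho$; the same computation shows $\{f_1,f_2,\rho\}$ topologically generates $\Homeo_+(\mathbb{S}^1)$, giving topological rank at most $3$. Squeezing this down to exactly $2$ by a tiling-and-encoding construction in the spirit of the proof of Theorem \ref{thm:generic} is possible — for instance with one generator taken to be the rotation $R_{1/4}$ and the encoded copies placed on two small arcs interchanged by the half-rotation — but it is considerably more delicate, and I expect the main obstacle there to be the following: any straightforward tiling construction forces both generators to fix the accumulation point of the tiles, so that the group they generate is trapped in a point stabilizer and cannot be dense in $\Homeo_+(\mathbb{S}^1)$; introducing a rotation is precisely what breaks this, at the cost of a substantially more involved verification that the resulting pair really generates a dense subgroup. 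For these reasons I would present the Thompson-group argument as the proof.
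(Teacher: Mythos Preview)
Your argument is correct. It differs from the paper's proof in a clean way: you invoke Thompson's group $T$ as a ready-made $2$-generated dense subgroup, reducing the problem to showing $\overline{T}=\Homeo_+(\mathbb{S}^1)$ via the inclusion $F\subset T$, the rotation $\rho$, and the two-stabilizer fragmentation lemma. The paper instead builds the two generators by hand: it takes a single element $f$ coming from (a rescaled copy of) Thompson's $F$ supported on a half-circle, observes that the second $F$-generator $h=g f g^{-1}$ is conjugate to $f$ by a suitably chosen \emph{fixed-point-free} circle homeomorphism $g$, and then shows directly that $\langle f,g\rangle$ is dense by covering $\mathbb{S}^1$ with finitely many $g$-translates of a half-arc and fragmenting. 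Your route is shorter and more conceptual, at the cost of importing the (classical but nontrivial) fact that $T$ is $2$-generated; the paper's route is self-contained modulo only the density of $F$ in $\Homeo_+(I)$, and makes the pair of generators explicit. The underlying engine---density of $F$ in the interval group plus one-dimensional fragmentation---is the same in both.
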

 
 \medskip
 
 {\bf Proof.} Regard $\mathbb{S}^1$ as the interval $[0,1]$ with the endpoints $0$ and $1$ identified.  Let $\phi:[0,1]\rightarrow[0,1/2]$ be any orientation-preserving homeomorphism.  Let $x_0$ and $x_1$ denote the usual two generators of the usual representation of Thompson's group $F$ inside $\Homeo_+(I)$.

\medskip

Let $f=\phi x_1\phi^{-1}$ and $h=\phi x_1^{-1}x_0 \phi^{-1}$, so $f$ and $h$ generate a dense subgroup of $\Homeo_+([0,1/2])$.  Extend $f$ and $h$ to circle homeomorphisms by letting them act trivially on $[1/2,1]$.  Note that the map $f$ is supported on $[1/4,1/2]$, and $f(x)<x$ for all $x\in(1/4,1/2)$, while the map $h$ is supported on $[0,3/8]$, with $h(x)<x$ for all $x\in(0,3/8)$.  It follows that $f$ and $h$ are conjugate in $\Homeo_+(\mathbb{S}^1)$, by a fixed-point free homeomorphism $g$ satisfying $g(0)=1/4$ and $g(3/8)=1/2$.  Let us also choose $g$ in such a way that $3/8$ is not a periodic point of $g$.

\medskip

Let $\Gamma=\langle f,g\rangle$; we claim $\Gamma$ is dense in $\Homeo_+(\mathbb{S}^1)$.  For $n\in\mathbb{N}$, set $a_n=g^n(0)$ and $b_n=g^n(3/8)$.  Note that $a_{n+1}$ lies between $a_n$ and $b_n$ for each $n$, so the intervals $(a_n,b_n)$ and $(a_{n+1},b_{n+1})$ are overlapping.  Since $g$ is fixed-point free and $b_0$ is not periodic, there exists an $N$ for which $b_N$ lies between $a_0$ and $b_0$.  Then the intervals $\{(a_n,b_n)\}_{n=0}^N$ form an open cover of $\mathbb{S}^1$.

\medskip

If $k\in\Homeo_+(\mathbb{S}^1)$ is arbitrary, then we may factorize $k$ as $k=k_0k_1k_2...k_N$, where the support of each $k_n$ is a subset of $[a_n,b_n]$.  Now by construction, $\Gamma$ contains $\langle f,g^{-1}fg\rangle=\langle f,h\rangle$ which is dense in $\Homeo_+[0,1/2]$.  So $k_0$ may be arbitrarily closely approximated by an element $\gamma_0$ of $\Gamma$, since $\supp(k_0)\subseteq[a_0,b_0]\subseteq [0,1/2]$.  Likewise, for each $n\leq N$, we have that $\Gamma=g^n\Gamma g^{-n}$ is dense in $\Homeo_+[g^n(0),g^n(1/2)]$, and $\supp(k_n)\subseteq[a_n,b_n]\subseteq [g^n(0),g^n(1/2)]$.  So $k_n$ may be arbitrarily closely approximated by $\gamma_n\in\Gamma$.  It follows that $k$ is arbitrarily closely approximated by a product of the form $\gamma_0\gamma_1...\gamma_N\in\Gamma$.  Since $k$ is an arbitrary element of $\Homeo_+(\mathbb{S}^1)$ to begin with, we see that $\overline{\Gamma}=\Homeo_+(\mathbb{S}^1)$, i.e. $\Homeo_+(\mathbb{S}^1)$ is topologically generated by $f$ and $g$.

 \bigskip
 
 \begin{thm}  Let $M$ be a compact manifold with boundary such that no connected component of $M$ is homeomorphic to a closed interval.  Let $\Homeo_0(M,\partial M)$ denote the group of homeomorphisms of $M$ which preserve the boundary $\partial M$.  Then the set\\

\begin{center} $\{(f_1,f_2,\dots,f_n)\in(\Homeo_0(M,\partial M))^n:\langle f_1,f_2,\dots,f_n\rangle$ is discrete$\}$
\end{center}
\vspace{.3cm}

\noindent contains an open dense subset of $(\Homeo_0(M,\partial M))^n$.
\end{thm}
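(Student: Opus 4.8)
The plan is to reduce to a single connected component and then, inside a small ball lying in a collar neighborhood of the boundary, install a Schottky (ping-pong) system of generators in the sense of Theorem~\ref{thm:ping-pong}. First I would use the decomposition $\Homeo_0(M,\partial M)=\prod_j\Homeo_0(M_j,\partial M_j)$ over the components $M_j$ of $M$: an isotopy to the identity cannot move points between components, and a boundary-fixing homeomorphism isotopic to the identity restricts to one on each $M_j$. Since a finite product of open dense sets is open dense, and since a tuple in a finite product of groups generates a discrete subgroup whenever all of its coordinate projections do (the generated group embeds in the product of the projected groups, which is discrete), it suffices to prove the statement for each connected $M_j$. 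So assume $M$ is connected with $\partial M\neq\emptyset$; not being an interval, $\dim M\geq2$. Next I would observe that the ping-pong argument in the proof of Theorem~\ref{thm:ping-pong} yields a local statement: whenever an $n$-tuple $(g_1,\dots,g_n)$ satisfies conditions (i) and (ii) there relative to some balls $A_1,\dots,A_n,B_1,\dots,B_n$ with pairwise disjoint closures inside one chart, there is an $r>0$ such that every $n$-tuple within $C^0$-distance $r$ of $(g_1,\dots,g_n)$ generates a discrete free subgroup of rank $n$. Consequently it is enough to prove that such ``ping-pong $n$-tuples'' are dense in $(\Homeo_0(M,\partial M))^n$; the union of the associated $r$-balls is then an open dense set contained in the set of discrete-generating tuples.

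For the density, fix $(f_1,\dots,f_n)$ and $\epsilon>0$, let $d$ be a metric on $M$, and let $\rho$ be the associated $C^0$ metric on $\Homeo_0(M,\partial M)$. Since each $f_i$ fixes $\partial M$ pointwise, the closed set $\{x\in M:d(f_i(x),x)\geq\epsilon/4\}$ is disjoint from $\partial M$, so by compactness there is an $\eta>0$ such that every $f_i$ displaces each point of the $\eta$-neighborhood of $\partial M$ by less than $\epsilon/4$. Using a collar $\partial M\times[0,1)$ of the boundary, I would ``fade each $f_i$ to the identity'' near $\partial M$: replace $f_i$ by a homeomorphism $\hat f_i\in\Homeo_0(M,\partial M)$ which equals the identity on an open neighborhood $U$ of $\partial M$ contained in the $\eta$-neighborhood, agrees with $f_i$ away from the $\eta$-neighborhood, and stays within $\epsilon/4$ of the identity throughout, so that $\rho(\hat f_i,f_i)<\epsilon/2$ and the fading parameter exhibits an isotopy from $\hat f_i$ to $f_i$. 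Then I would pick an open ball $V$ with $\overline V\subset U\cap\mathrm{int}(M)$ and $\mathrm{diam}(V)<\epsilon/2$, and, exactly as in the explicit construction in the proof of Theorem~\ref{thm:ping-pong} but with $V$ in the role of the chart used there, choose balls $A_i,B_i\subset V$ with pairwise disjoint closures and homeomorphisms $h_1,\dots,h_n\in\Homeo_0(M,\partial M)$ supported in $V$ which satisfy conditions (i) and (ii) relative to the $A_i,B_i$. Finally set $\tilde f_i:=\hat f_i h_i$.

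It then remains to check that $(\tilde f_1,\dots,\tilde f_n)$ is a ping-pong $n$-tuple $\epsilon$-close to $(f_1,\dots,f_n)$. For closeness, $h_i$ is supported in the ball $V$ while $\hat f_i$ is the identity on $U\supseteq V$, so $\rho(\hat f_i h_i,\hat f_i)=\sup_{x\in V}d(h_i(x),x)\leq\mathrm{diam}(V)<\epsilon/2$, which together with $\rho(\hat f_i,f_i)<\epsilon/2$ gives $\rho(\tilde f_i,f_i)<\epsilon$. For the ping-pong conditions, each of the sets $A_j$, $B_j$, $\overline A_i$, $\overline B_i$ and each of their $h_i$- and $h_i^{-1}$-images lies in $V\subseteq U$ by (i) and (ii) for the $h_i$; since $\hat f_i$ and $\hat f_i^{-1}$ restrict to the identity on $U$, the maps $\tilde f_i=\hat f_i h_i$ and $\tilde f_i^{-1}=h_i^{-1}\hat f_i^{-1}$ agree with $h_i$ and $h_i^{-1}$ on all of these sets, so $(\tilde f_1,\dots,\tilde f_n)$ inherits (i) and (ii). This completes the plan.

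The step I expect to require the most care is the ``fading to the identity near $\partial M$'': one must check that a boundary-fixing homeomorphism which is uniformly $C^0$-small on a collar can be interpolated to the identity by an honest isotopy supported in that collar, with every intermediate map a homeomorphism of $M$. This is a standard collar-neighborhood manipulation, but it is the one genuinely topological ingredient, and it is exactly the point where the hypothesis $\partial M\neq\emptyset$ enters: for a closed manifold there is in general no nonempty open set on which all of the given $f_i$ can be forced to act trivially by an arbitrarily small perturbation, so this route to density is unavailable for closed manifolds (the circle included), consistently with the remark following Theorem~\ref{thm:ping-pong}.
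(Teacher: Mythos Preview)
Your proof is correct and follows the same strategy as the paper's: exploit that the given maps fix the boundary pointwise, so near $\partial M$ they are $C^0$-small, and install a ping-pong configuration there via a small perturbation, invoking Theorem~\ref{thm:ping-pong}. The paper's argument is terser---it localizes near a single boundary point rather than using a full collar, and it leaves implicit both the component reduction and the fading-to-identity step you single out as the delicate point---but the underlying idea is identical.
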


{\bf Proof.}  Note that by hypothesis, $\mathrm{dim} M\geq 2$.  Let $(g_1,g_2,\dots,g_n)\in(\Homeo_0(M,\partial M))^n$ and $\epsilon>0$ be arbitrary.  Let $x\in\partial M$ be a boundary point, and let $\delta>0$ be so small that $|y-x|<\delta$ implies $|g_i(y)-x|<\epsilon$ for all $y\in M$, for each $1\leq i\leq n$.  Let $U$ be an open subset in the interior of $M$ so that the closure of $U$ is a subset of the $\delta$-ball about $x$.  Now since $M$ is at least a $2$-dimensional manifold, we can perturb $g_1,\dots,g_n$ to obtain new maps $f_1,\dots,f_n$, which agree with $g_1,\dots, g_n$ respectively outside of the $\delta$-ball about $x$; but which fix the boundary of $U$, and which exhibit the ping-pong behavior described in Theorem  \ref{thm:ping-pong}.  Then $f_1,\dots,f_n$ are $\epsilon$-close to $g_1,\dots,g_n$ respectively, and any sufficiently small perturbations of the new maps $f_1,\dots,f_n$ will generate a discrete subgroup of $\Homeo_0(M,\partial M)$. \ $\square$

 \bigskip

 \end{document}